\newtheorem{theorem}{Theorem}[section]
\newtheorem{lemma}[theorem]{Lemma}
\newtheorem{proposition}[theorem]{Proposition}
\theoremstyle{definition}
\newtheorem{definition}[theorem]{Definition}
\newtheorem{example}[theorem]{Example}
\theoremstyle{remark}
\newtheorem{remark}[theorem]{Remark}
\begin{document}

\newtheorem{conjecture}{Conjecture}[section]
\newtheorem{corollary}{Corollary}[section]
\newtheorem{Note}{Note}[section]

\theoremstyle{definition}
\newtheorem{rules}{Rule}[section]

\theoremstyle{remark}

\def\ve{\varepsilon}
\def\G{\Gamma}
\def\rar{\rightarrow}
\def\dis{\displaystyle}
\def\le{\left(}
\def\ri{\right)}

\def\res{\mathop{{\rm Res}}\limits}


\title{Mellin-Barnes integrals and the method of brackets}

\author[Ivan Gonzalez et al]{Ivan Gonzalez}
\address{Instituto de F\'{i}sica y Astronom\'{i}a, Universidad de Valparaiso, Gran Breta\~{n}a $1111$, Valparaiso, Chile}
\email{ivan.gonzalez@uv.cl}

\author[]{Igor Kondrashuk}
\address{Grupo de Matem\'{a}tica {A}plicada,~{D}epartamento de {C}iencias {B}\'{a}sicas,~{U}niversidad del 
{B}\'{i}o-{B}\'{i}o,~{C}ampus~{F}ernand~{M}ay, {C}hill\'{a}n,~{C}hile} 
\email{igor.kondrashuk@ubiobio.cl}

\author[]{Victor H. Moll}
\address{Department of Mathematics,
Tulane University, New Orleans, LA 70118}
\email{vhm@math.tulane.edu}

\author[]{Luis~M.~Recabarren}
\address{Departamento de F\'{i}sica, Universidad Santa Maria, Valparaiso, Chile}
\email{luis.recabarren@usm.cl}

\subjclass{Primary 33C05, Secondary 30E20, \, 33E20}

\keywords{Definite integrals, Mellin-Barnes representations, method of brackets}

\numberwithin{equation}{section}

\newcommand{\imagpart}{\mathop{\rm Im}\nolimits}
\newcommand{\realpart}{\mathop{\rm Re}\nolimits}
\newcommand{\no}{\noindent}
\newcommand{\ift}{\int_{0}^{\infty}}
\newcommand{\ione}{\int_{0}^{1}}
\newcommand{\eqf}{\stackrel{\bullet}{=}}
\newcommand{\op}[1]{\ensuremath{\operatorname{#1}}}
\newcommand{\pFq}[5]{\ensuremath{{}_{#1}F_{#2} \left( \genfrac{}{}{0pt}{}{#3}
{#4} \bigg| {#5} \right)}}

\begin{abstract}
The method of brackets is a method for the evaluation of definite integrals based on a small number of 
rules. This is employed here for the evaluation of Mellin-Barnes integral. The fundamental idea is 
to transform these integral representations into a bracket series to obtain their values.  The expansion of 
the gamma function in such a series constitute the main part of this new application. The power and flexibility of 
this procedure is illustrated with a variety of examples. 
\end{abstract}

\maketitle

\section{Introduction}
\label{sec-intro}

A classical approaches to special functions developed in the $19^{th}$ century was to classify them from the 
point of view of solutions 
 to (second) order differential equations with analytic coefficients
\begin{equation}
\label{ode-1}
a(z) \frac{d^{2}u}{dz^{2}} + b(z) \frac{du}{dz}  + c(z) u = 0.
\end{equation}
\noindent
Gauss understood that the singularities of this equation are central to this classification. Moreover, the role of 
$\infty$ as a possible singularity of \eqref{ode-1} plays an important role.  The notion of regular singularities and 
the well-known Frobenius method to produce solutions arose from this point of view. The reader will find 
information about these ideas in \cite{gray-2000a} and \cite{whittaker-2020a}. 

It is a classical result that any differential equation with $3$ regular singular points (on $\mathbb{C} \cup \{ \infty \}$) 
can be transformed to the hypergeometric differential equation
\begin{equation}
\label{hyper-ode1}
z(1-z) \frac{d^{2}w}{dz^{2}} + [ c - (a+b-1)z] \frac{dw}{dz} - a b w  = 0,
\end{equation}
\noindent
with singularities at $0, \, 1, \, \infty$.  The solution of \eqref{hyper-1}, normalized by $w(0) = 1$ is the hypergeometric 
function, defined by the series expansion 
\begin{equation}
\pFq21{a \,\,\,  b}{c}{z} = \sum_{n=0}^{\infty} \frac{(a)_{n} (b)_{n}}{(c)_{n} \, n!} z^{n},
\end{equation}
\noindent
where $(q)_{n}$ os the Pochhammer symbol  defined by 
\begin{equation}
(q)_{n} = \begin{cases} 1 & \textnormal{ if } n = 0 \\
q(q+1) \cdots (q+n-1) & \textnormal{ if } n > 0. 
\end{cases}
\end{equation}
\noindent
Among the many representations of the hypergeometric function we single out the so-called Mellin-Barnes integral 
\begin{equation}
\pFq21{a \,\,\,  b}{c}{z}  = \frac{\Gamma(c)}{\Gamma(a) \Gamma(b)} \times 
\frac{1}{2 \pi i } \int_{\gamma} \frac{\Gamma(a+s) \Gamma(b+s) \Gamma(-s)}{\Gamma(c+s)} (-z)^{s} \, ds,
\end{equation}
\noindent
where the contour of integration $\gamma$ joins $- i \infty$ to $+ i \infty$ and 
separates the poles of $\Gamma(-s)$ (namely $0, \, 1, \, 2, \ldots$) from those of 
$\Gamma(a+s) \Gamma(b+s)$ (located at $-a, \, -a-1, \ldots, -b, -b-1, \ldots$).  In the general case, the 
poles of the gamma factors are located on horizontal semi-axes. The 
contour $\gamma$ has to be chosen to separate those moving to the right from those moving to the left. 
Examples may be found in \cite{smirnov-2004a,smirnov-2006a}.

Many important functions are obtained from the hypergeometric function by coalescing singularities. For instance, the 
so-called Whittaker function  $W_{\kappa,\mu}(z)$, defined from the equation 
\begin{equation}
\frac{d^{2}w}{dz^{2}}+ \left( - \frac{1}{4} + \frac{k}{z} + \frac{1/4 - m^{2}}{z^{2}} \right) w = 0
\end{equation}
also has the integral representation 
\begin{equation}
W_{\kappa,\mu}(z) = \frac{e^{-z/2} }{2 \pi i } 
\int_{\gamma} z^{-s} 
\frac{\Gamma \left( \tfrac{1}{2} + \mu + s \right) \Gamma \left( \tfrac{1}{2} - \mu + s \right) 
\Gamma \left(- \kappa - s \right) }
{\Gamma \left( \tfrac{1}{2} + \mu - \kappa \right) \Gamma \left( \tfrac{1}{2} - \mu - \kappa \right) }  \, ds
\end{equation} 
\noindent
with a similar condition on the contour of integration. 

The main type of integrals considered in this work come from Mellin transforms. This concept is recalled next. 

\begin{definition}
Given a function $f(x)$, defined on the positive real axis $\mathbb{R}^{+}$, its Mellin transform is defined by 
\begin{equation}
\varphi(s) = (\mathcal{M}f)(s) = \int_{0}^{\infty} x^{s-1} f(x) \, dx.
\end{equation}
\noindent
This relation may be inverted by a line integral 
\begin{equation}
\label{mellin-2}
f(x) = (\mathcal{M}^{-1} \varphi)(x) = \frac{1}{2 \pi i } \int_{\gamma} 
x^{-s} \varphi(s) \, ds,
\end{equation}
\noindent
by an appropriate choice of the contour $\gamma$ of the type described above. 
\end{definition}

\smallskip

The goal of the current work is to present a procedure to evaluate inverse Mellin transforms based on the 
method of brackets. This is a method to evaluate definite integrals and it is described in Section 
\ref{sec-rules}. The class of functions considered here are of the form 
\begin{equation}
\label{gen-fun1}
\varphi(s) = 
 \frac{\prod\limits_{j=1}^{M}
\Gamma \left( a_{j}+A_{j}s\right) }{\prod\limits_{j=1}^{N}\Gamma \left(
b_{j}+B_{j}s\right) }\frac{\prod\limits_{j=1}^{P}\Gamma \left(
c_{j}-C_{j}s\right) }{\prod\limits_{j=1}^{Q}\Gamma \left(
d_{j}-D_{j}s\right) }.
\end{equation}
\noindent
For simplicity the  parameters 
$A_{j}, \, B_{j}, \, C_{j}, \, D_{j}$ are taken real and positive.  Integrals of the form \eqref{mellin-2} with an 
integrand of the type \eqref{gen-fun1} will be referred as 
\texttt{Mellin-Barnes integrals}. 

\begin{remark}
In  high energy physics  Mellin-Barnes integrals  appear frequently at the intermediate steps  in the process of calculations  
\cite{allendes-2010a,allendes-2012a,gonzalez-2017c,gonzalez-2013a,gonzalez-2013b,kniehl-2013a,
gonzalez-2018a,smirnov-2004a}. These are contour integrals 
in which the integrands are quotients of Euler Gamma functions, as in \eqref{gen-fun1}.  This
 type of contour integrals represent a typical representation of 
generalized hypergeometric functions. The Mellin-Barnes representation of denominators in integrands corresponding to Feynman diagrams in the momentum space representation  is a standard procedure  in 
quantum field theory. This approach helps to calculate very difficult Feynman diagrams \cite{smirnov-2004a} 
and this is the reason why  Barnes integrals usually appear at the penultimate step of calculation. 
\end{remark}

\begin{remark}
In the previous work \cite{Gonzalez:2015msa} it was  mentioned  that it would be interesting to combine the 
Mellin-Barnes integrals and the method of brackets, due to the similarity of these methods. 
This combination is presented here. In addition, several known relations 
for generalized hypergeometric functions have been reestablished by combining the  method of brackets and Mellin-Barnes transformations.  Our investigation has been motivated by the work 
of Prausa \cite{prausa-2017a} which proposes an interesting symbolic procedure. Our work compares it 
with the method of brackets.  
\end{remark}

 \section{The method of brackets}
 \label{sec-rules}

This is a method that 
evaluates definite integrals over the half line $[0, \, \infty)$. The 
application of the method consists of small number of rules, deduced in 
heuristic form, some of which are placed on solid 
ground \cite{amdeberhan-2012b}. 

For $a \in \mathbb{R}$, the symbol 
\begin{equation}
\langle a \rangle  = \ift x^{a-1} \, dx 
\end{equation}
is the {\em bracket} associated to the (divergent) integral on the right. The 
symbol 
\begin{equation}
\phi_{n} = \frac{(-1)^{n}}{\Gamma(n+1)} 
\end{equation}
\noindent
is called the {\em indicator} associated to the index $n$. The notation
$\phi_{i_{1}i_{2}\cdots i_{r}}$, or simply 
$\phi_{12 \cdots r}$, denotes the product 
$\phi_{i_{1}} \phi_{i_{2}} \cdots \phi_{i_{r}}$. 

\medskip

\noindent
{\bf {\em Rules for the production of bracket series}}

\smallskip

The first part of the method is to associate to the integral 
\begin{equation}
I(f) = \int_{0}^{\infty} f(x) \, dx 
\end{equation}
a bracket series according to 

\noindent
${\mathbf{Rule \, \, P_{1}}}$. Assume $f$ has the expansion 
\begin{equation}
f(x) = \sum_{n=0}^{\infty} \phi_{n} a_{n} x^{\alpha n + \beta -1 }.
\end{equation}
\noindent
Then $I(f)$ is assigned the \texttt{bracket series}
\begin{equation}
I(f)  = 
\sum_{n \geq 0}a_{n} \langle \alpha n + \beta \rangle.
\end{equation}

\smallskip

\noindent
${\mathbf{Rule \, \, P_{2}}}$. 
For $\alpha \in \mathbb{R}$, the multinomial power 
$(a_{1} + a_{2} + \cdots + a_{r})^{\alpha}$ is assigned the 
$r$-dimension bracket series 
\begin{equation}
\sum_{n_{1} \geq 0} \sum_{n_{2} \geq 0}  \cdots \sum_{n_{r} \geq 0}
\phi_{n_{1}\, n_{2} \,  \cdots n_{r}}
a_{1}^{n_{1}} \cdots a_{r}^{n_{r}} 
\frac{\langle -\alpha + n_{1} + \cdots + n_{r} \rangle}{\Gamma(-\alpha)}.
\end{equation}

\noindent
${\mathbf{Rule \, \, P_{3}}}$. 
Each representation of an integral by a bracket series has
associated a {\em complexity  index of the representation} via 
\begin{equation}
\text{complexity index } = \text{number of sums } - \text{ number of brackets}.
\end{equation}
\noindent 
It is important to observe that the complexity  index is attached to a specific 
representation of the integral and not just to integral itself.  The 
experience obtained by the authors using this method suggests that, among 
all representations of an integral as a bracket series, the one with 
{\em minimal complexity  index} should be chosen.  The level of difficulty in the analysis of the 
resulting bracket series increases with the complexity index. 

\medskip

\noindent
{\bf {\em Rules for the evaluation of a bracket series}}

\smallskip

\noindent
${\mathbf{Rule \, \, E_{1}}}$. 
Let $a, \, b \in \mathbb{R}$. The one-dimensional bracket series is assigned the value 
\begin{equation}
\label{mult-sum}
\sum_{n \geq 0} \phi_{n} f(n) \langle an + b \rangle = 
\frac{1}{|a|} f(n^{*}) \Gamma(-n^{*}),
\end{equation}
\noindent
where $n^{*}$ is obtained from the vanishing of the bracket; that is, $n^{*}$ 
solves $an+b = 0$. This is precisely the Ramanujan's Master Theorem.

\smallskip

The next rule provides a value for multi-dimensional bracket series of 
index $0$, that is, the number 
of sums is equal to the number of brackets. 

\smallskip

\noindent
${\mathbf{Rule \, \, E_{2}}}$. 
Let $a_{ij} \in \mathbb{R}$. Assuming the matrix
$A = (a_{ij})$ is non-singular, then the assignment is 
\begin{equation}
\sum_{n_{1} \geq 0} \cdots \sum_{n_{r} \geq 0} \phi_{n_{1} \cdots n_{r}} 
f(n_{1},\cdots,n_{r}) 
\langle a_{11}n_{1} + \cdots + a_{1r}n_{r} + c_{1} \rangle \cdots
\langle a_{r1}n_{1} + \cdots + a_{rr}n_{r} + c_{r} \rangle 
\nonumber
\end{equation}
\begin{equation}
=  \frac{1}{| \text{det}(A) |} f(n_{1}^{*}, \cdots n_{r}^{*}) 
\Gamma(-n_{1}^{*}) \cdots \Gamma(-n_{r}^{*}) 
\nonumber
\end{equation}
\noindent
where $\{ n_{i}^{*} \}$ 
is the (unique) solution of the linear system obtained from the vanishing of 
the brackets. There is no assignment if $A$ is singular. 

\smallskip

\noindent
${\mathbf{Rule \, \, E_{3}}}$. 
The value of a multi-dimensional bracket series of positive complexity  index
is obtained by computing all the contributions of maximal rank 
by Rule $E_{2}$. These contributions to the integral appear as series in the 
free indices. Series converging in a common region are added and divergent/nulls series are discarded.  There 
is no assignment to a bracket series of negative complexity index. If all the resulting series are 
discarded, then the method is not applicable. 

\begin{remark}
There is a small collection of formal operational rules for brackets. These will be used in the calculations presented 
below. 

\begin{rules}
For any $\alpha \in \mathbb{R}$ the bracket satisfies $\langle - \alpha \rangle = \langle \alpha \rangle$.
\end{rules}
\begin{proof}
This follows from the change of variables $x \mapsto 1/x$ in $\begin{displaystyle} \langle - \alpha \rangle = 
\int_{0}^{\infty} x^{-\alpha - 1} \, dx. \end{displaystyle}$
\end{proof}

A  similar change of variables gives the next scaling rule:

\begin{rules}
\label{rule2.2}
For any $\alpha, \, \beta, \, \gamma \in \mathbb{R}$ with $\alpha \neq 0$ the bracket satisfies 
\begin{equation}
\langle \alpha  \gamma + \beta \rangle = \frac{1}{| \alpha |} \left\langle  \gamma + \frac{\beta}{\alpha}  \right\rangle.
\end{equation}
\noindent
This can be deduced from Rule $E_{1}$. 
\end{rules}

\begin{rules}
For any $\alpha, \, \beta \in \mathbb{R}$ with $\alpha  \neq 0$, for any $n \in \mathbb{N}$ appearing as the 
index of a sum  any allowable function 
$F$, the identity 
\begin{equation}
F(n) \langle \alpha n + \beta \rangle = \frac{1}{| \alpha |} F \left( - \frac{\beta}{\alpha} \right) 
\left\langle  n  + \frac{\beta}{\alpha}  \right\rangle
\end{equation}
\noindent
in the sense that any appearance of the left-hand side in a bracket series may be replaced by the right-hand side.
\end{rules}
\begin{proof}
This follows directly from the rule $E_{1}$ to evaluate bracket series. 
\end{proof}

\end{remark}

\section{Some operational rules for integration}
\label{sec-operational}

This section describes the relation between line integrals, like those appearing in the inverse 
Mellin transform, and the 
method of brackets.  These complement those given for the discrete sums. 
The results presented here have appeared in \cite{prausa-2017a} as an extension 
of the method of 
brackets and used to produce a minimal Mellin-Barnes representations of integrals appearing in 
connection with Feynman diagrams. 

\subsection{The equivalence of brackets and the  Dirac's delta}
Let $f$ be a function defined on $\mathbb{R}^{+}$ and consider its Mellin transform 
\begin{equation}
F(s) = \int_{0}^{\infty} x^{s-1} f(x) \, dx,
\label{mellin-form1} 
\end{equation}
\noindent
with inversion rule 
\begin{equation}
f(x) = \frac{1}{2 \pi i } \int_{\gamma} x^{-s} F(s) \, ds,
\label{mellin-form2}
\end{equation}
\noindent
with the usual convention on the contour $\gamma$. Then, replacing \eqref{mellin-form2} into \eqref{mellin-form1} produces 
\begin{eqnarray}
F(s) & = & \int_{0}^{\infty} x^{s-1} \left[ \frac{1}{2 \pi i } \int_{\gamma} x^{-s'} F(s') \, ds' \right] \, dx \\ 
&  = & \frac{1}{2 \pi i } \int_{\gamma} F(s') \left[ \int_{0}^{\infty} x^{s - s' - 1} \, dx \right] \, ds' \nonumber \\
& = & \frac{1}{2 \pi i } \int_{\gamma} F(s') \langle s - s' \rangle \, ds'. \nonumber 
\end{eqnarray}

This proves:

\begin{rules}
\label{thm-fun1}
The rule for integration respect to brackets is given by 
\begin{equation}
\int_{\gamma} F(s) \langle s + \alpha \rangle \, ds = 2 \pi i F(-\alpha)
\end{equation}
\noindent 
where $\gamma$ is a contour of the usual type. The generalization 
\begin{equation}
\int_{\gamma} F(s) \langle \beta s + \alpha \rangle \, ds = \frac{2 \pi i }{| \beta| } 
F \left( - \frac{\alpha}{\beta} \right), \quad \textnormal{with} \,\, \beta \in \mathbb{R}
\end{equation}
\noindent 
can be obtained from Rule \ref{rule2.2}.
\end{rules}

\begin{remark}
From an operational point of view, the result in Theorem \ref{thm-fun1} may be written as 
\begin{equation}
\langle s + \alpha \rangle = 2 \pi i \delta(s+ \alpha),
\end{equation}
\noindent
where $\delta$ is Dirac's delta function.
\end{remark}



\subsection{Mellin transform} \label{MTB}

This section contains a brief  review of the Mellin transform. Recall that this is defined by 
\begin{equation}
 \label{MT}
\mathcal{M}(f)(z)  = \int_0^{\infty} x^{z-1}f(x)~dx, 
\end{equation}
where the arguments are the function $f$ to be transformed and the variable $z$ appearing in the integral.  The inverse Mellin transformation is 
\begin{equation}
 \label{MT Cauchy} 
f(x) = \frac{1}{2\pi i}\int_{c-i\infty}^{c + i\infty}x^{-z}\mathcal{M}(f)(z) \, dz \quad  \textnormal{ for } x \in (0,\infty).
\end{equation}
The point $c$ associated to the  contour of integration must be in the vertical strip $c_1 < c < c_2$, with 
boundaries determined by the condition that
\begin{eqnarray} \label{holom} 
\int_0^{1} x^{c_1-1}f(x)~dx  &  {\rm and } &  \int_1^{\infty} x^{c_2-1}f(x)~dx
\end{eqnarray}
must be finite.  This is satisfied if the function $f$ satisfies the growth conditions 
\begin{equation*}
|f(x)| < 1/x^{c_1} \quad {\rm when} \; x \to +0, \textnormal{ and }  \quad  |f(x)|< 1/x^{c_2} \quad
\textnormal{ when} \; x \to + \infty .
\end{equation*}
The conditions (\ref{holom}) imply that the Mellin transform $\mathcal{M}(f)(z)$ is holomorphic in 
the vertical strip $c_1 <  {\rm Re}~z  < c_2$.  The asymptotic behavior of the integrand is then used to 
determine the direction in which (a finite segment) of the vertical line contour is closed in order to produce 
a contour to apply Cauchy's integral theorem. The singularities of the integrand are then used to analyze the 
behavior of the integrals as the finite segment becomes infinite. 


One of the simplest examples of the Mellin transformation is
\begin{eqnarray*} 
\G(z) = \int_0^{\infty} e^{-x}x^{z-1}~dx & {\rm and }  &   e^{-x} = \frac{1}{2\pi i}\int_{c-i\infty}^{c + i\infty}x^{-z} \G(z) ~dz.
\end{eqnarray*}
The contour in the complex plane is the vertical line with ${\rm Re}~z = c$ is in the strip $0 < c < A,$ where $A$ is a real 
positive number, the vertical line contour must be closed to the left. It is convenient to include here a proof 
of equations (\ref{MT}-\ref{MT Cauchy}). This is classical and may be found  in any textbook on 
 the theory of complex variable. It is reproduced here for pedagogical arguments.   First,  use the fact 
  that  $\mathcal{M}(f)(z)$ is holomorphic in the strip $c_1 <  {\rm Re}~z  < c_2$, then taking 
  $\delta > 0$ to be infinitesimally small,
\begin{eqnarray}
& &  \label{MT direct} \\
\mathcal{M}(f) (z) & = & \int_0^{\infty} x^{z-1}f(x)~dx  \nonumber  \\
& = &  \frac{1}{2\pi i}\int_0^{\infty} x^{z-1} dx\int_{c-i\infty}^{c + i\infty}x^{-\omega} \mathcal{M}(f)(\omega) ~d\omega 
\nonumber  \\
& = & \frac{1}{2\pi i}\int_0^1 x^{z-1} dx\int_{c-i\infty}^{c + i\infty}x^{-\omega} \mathcal{M}(f)(\omega) ~d\omega   
\nonumber  \\
& & \quad \quad + \frac{1}{2\pi i}\int_1^\infty x^{z-1} dx\int_{c-i\infty}^{c + i\infty}x^{-\omega}
\mathcal{M}(f) (\omega) ~d\omega 
\nonumber \\
& = &  \frac{1}{2\pi i}\int_0^1 x^{z-1} dx\int_{c_1-\delta-i\infty}^{c_1-\delta + i\infty}x^{-\omega} 
\mathcal{M}(f)(\omega) ~d\omega \nonumber \\
& &  \quad \quad +  \frac{1}{2\pi i}\int_1^\infty x^{z-1} dx\int_{c_2+\delta-i\infty}^{c_2+\delta + i\infty}x^{-\omega} \mathcal{M}(f)(\omega) ~d\omega  \nonumber \\ 
& & = \frac{1}{2\pi i}\int_{c_1-\delta-i\infty}^{c_1-\delta + i\infty}\frac{\mathcal{M}(f)(\omega)}{z-\omega} ~d\omega  
- \frac{1}{2\pi i}\int_{c_2+\delta-i\infty}^{c_2+\delta + i\infty}\frac{\mathcal{M}(f)(\omega)}{z-\omega} ~d\omega
\nonumber  \\
& & = \frac{1}{2\pi i}\int_{c_1-\delta+i\infty}^{c_1-\delta - i\infty}\frac{\mathcal{M}(f)(\omega)}{\omega-z} ~d\omega + \frac{1}{2\pi i}\int_{c_2+\delta-i\infty}^{c_2+\delta + i\infty}\frac{\mathcal{M}(f)(\omega)}{\omega-z} ~d\omega 
\nonumber \\
& & = \frac{1}{2\pi i}\oint_{CR}\frac{\mathcal{M}(f)(\omega)}{\omega-z} ~d\omega, \nonumber 
\end{eqnarray}
\noindent
where $CR$ is a rectangular contour constructed from the two vertical lines from 
 (\ref{MT direct}) supplemented by two horizontal lines at the imaginary complex infinities of the
  strip $c_1 <  {\rm Re}~z  < c_2.$ The contour $CR$ is closed in the counterclockwise orientation. 

The inverse transformation proof is even simpler and may be used in order to define Dirac $\delta$-function.
Observe that 
\begin{eqnarray}
 \label{MT inverse} 
f(x) & = & \frac{1}{2\pi i}\int_{c-i\infty}^{c + i\infty}x^{-z} \mathcal{M}(f)(z) ~dz   \\
& = &  \frac{1}{2\pi i}\int_{c-i\infty}^{c + i\infty}x^{-z}~dz\int_0^{\infty} y^{z-1}f(y)~dy \nonumber \\
& = &  \int_0^{\infty}\delta(\ln{(y/x)}) y^{-1}f(y)~dy = f(x), \nonumber 
\end{eqnarray}
which  is valid in view of the relation 
\begin{eqnarray} 
\label{delta}
\frac{1}{2\pi i}  \int_{c - i\infty}^{c + i\infty}e^{(x-y)z}  dz & = & 
\frac{1}{2\pi }  \int_{-\infty}^{\infty}e^{(x-y)(c +i\tau)}  d\tau  \\
& = &
\frac{e^{(x-y)c}}{2\pi }\int_{-\infty}^{\infty}e^{i(x-y)\tau}  d\tau \nonumber \\
& = & e^{(x-y)c}\delta(x-y) = \delta(x-y). \nonumber
\end{eqnarray}

This proof of the inverse Mellin transformation belongs to D. Hilbert and may be found in any good textbook dedicated to complex analysis. In this paper we show 
that the method of bracket when applied to Mellin integrals  in some sense is equivalent to this old proof of the inverse Mellin transformation. {\it More precisely, we may argue that
by using the same trick like in this quite old proof of the inverse Mellin transformation, namely to divide the integration over the variable $x \in [0,\infty[$ in two parts,  
$x \in [0,1]$ and  $x \in [1,\infty[,$ and by creating in a such way a closed contour in the plane of the complex variable we may map all the rules of the method of bracket 
to the Cauchy integral formula.}

 In high energy particle physics, in order to solve  integro-differential equations representing evolution of important physical quantities, the transformation to Mellin moment 
is frequently used [see, for example \cite{DGLAP}]. 
The inverse transformation of the Mellin moment has completely the same form like the inverse Mellin transformation. The question may appear if the inverse Mellin transformation 
returns us to some function but how we may know if we came back to this function of a real variable from its Mellin moment or from its Mellin transform in the complex plane?  
We may differ the Mellin moments from the the Mellin transforms by studying the asymptotic behaviour at the complex infinity of the given function in the complex plane. 

\begin{remark}
The question of complexity, as introduced in \textbf{Rule} $\mathbf{P_{3}}$ is now extended. In the process of 
evaluating an integral by the method of brackets, define $\sigma$ to be the number of sums plus the number of 
contour integrals appearing and $\delta$ to be the number of brackets plus the number of integrals on the 
half-line $[0, \, \infty)$ that appear. The (generalized) index of complexity is $\iota = \sigma - \delta$. This 
index should be seen as a measure of difficulty in the evaluation of the integral by the method of brackets. In 
the case $\iota = 0$, the answer is given by a single term. For $\iota > 0$, the gamma factors appearing in the
numerators of the line integrals must be expanded in bracket series. This guarantees that the method provides
all series representations of the solution. As usual series converging in a common region must be added. It 
is the heuristic observation that the bracket/line integral representations of a problem should aim to minimize
the index $\iota$. 
\end{remark}

\subsection{Multiple integrals}
The method discussed here can be extended to evaluate multiple integrals with a bracket representation 
\begin{multline}
J = \left( \frac{1}{2 \pi i} \right)^{N} \int_{\gamma_{1}} ds_{1} \cdots \int_{\gamma_{N}}  ds_{N}  \\
F(s_{1}, \cdots, s_{N}) 
\langle a_{11}s_{1} + \cdots a_{1N}s_{N} + c_{1} \rangle \cdots 
\langle a_{N1}s_{1} + \cdots a_{NN}s_{N} + c_{N} \rangle,
\end{multline}
\noindent 
by using the one-dimensional rule in iterated form.  The expression for $J$ has the form 
\begin{equation}
J = \frac{1}{| \det(A) |} F(s_{1}^{*}, \cdots, s_{N}^{*})
\end{equation}
\noindent
where $A = (a_{ij})$, with $a_{ij} \in \mathbb{R}$ 
 and $\{ s_{i}^{*} \} \, \, (i=1, \ldots, N)$ is the solution of the system $A \vec{s} = - \vec{c}$ 
produced by the vanishing of the arguments in the brackets appearing in this process.

\section{Transforming Mellin-Barnes integrals to bracket series}
\label{sec-MB-to-brackets}

This section evaluates Mellin-Barnes integrals by transforming them into a bracket series. The rules of 
Section \ref{sec-rules} are then used to produce an analytic expression for the integral. 

\begin{lemma}
\label{gamma-brackets1}
The gamma function has the bracket series representation 
\begin{equation}
\Gamma(\alpha) = \sum_{n=0}^{\infty} \phi_{n} \langle \alpha + n \rangle.
\label{gamma-brack1}
\end{equation}
\end{lemma}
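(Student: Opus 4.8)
The plan is to start from Euler's integral representation of the gamma function,
\begin{equation*}
\Gamma(\alpha) = \int_{0}^{\infty} x^{\alpha-1} e^{-x} \, dx,
\end{equation*}
which is the natural point of departure: both $\Gamma(\alpha)$ and the bracket $\langle a \rangle$ are defined as integrals over $[0,\infty)$, so the machinery of Section~\ref{sec-rules} applies directly to the right-hand side. The entire argument then reduces to a single application of \textbf{Rule} $\mathbf{P_1}$, once the integrand has been written in the required form.

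The key observation, which I would carry out next, is that the Maclaurin expansion of $e^{-x}$ has coefficients that are \emph{exactly} the indicators: since $\phi_n = (-1)^n/\Gamma(n+1) = (-1)^n/n!$, one has $e^{-x} = \sum_{n=0}^{\infty} \phi_n x^{n}$, and therefore
\begin{equation*}
x^{\alpha-1} e^{-x} = \sum_{n=0}^{\infty} \phi_n \, x^{\alpha + n - 1}.
\end{equation*}
This is precisely the template $\sum_n \phi_n a_n x^{\mu n + \nu - 1}$ to which \textbf{Rule} $\mathbf{P_1}$ is attached, with coefficients $a_n = 1$, step $\mu = 1$ and shift $\nu = \alpha$. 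Reading off the associated bracket series by integrating term by term—each factor $\phi_n$ remaining attached to the monomial it multiplies, and using $\langle \alpha + n \rangle = \int_0^\infty x^{(\alpha+n)-1}\,dx$—then yields $\Gamma(\alpha) = \sum_{n=0}^{\infty} \phi_n \langle \alpha + n \rangle$, which is \eqref{gamma-brack1}.

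The hard part is conceptual rather than computational: the interchange of summation and integration is only formal, since each bracket $\langle \alpha + n \rangle$ is by definition a divergent integral, so the displayed identity cannot be read as an ordinary convergent expansion. Its meaning is fixed entirely by the evaluation rules, and the genuine content is that the representation is \emph{self-consistent} with \textbf{Rule} $\mathbf{E_1}$ (Ramanujan's Master Theorem). To close the loop I would apply $E_1$ to the right-hand side: with $f \equiv 1$, $a = 1$ and $b = \alpha$, the vanishing of the bracket gives $n^{*} = -\alpha$, and the rule returns $\tfrac{1}{|a|} f(n^{*}) \Gamma(-n^{*}) = \Gamma(\alpha)$. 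This recovery of the original value is what legitimizes \eqref{gamma-brack1} within the heuristic framework, and I expect it to be the step on which any scrutiny of the argument will rest.
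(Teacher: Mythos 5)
Your proof is correct and follows essentially the same route as the paper: expand $e^{-x}$ in its Maclaurin series inside Euler's integral $\Gamma(\alpha)=\int_0^\infty x^{\alpha-1}e^{-x}\,dx$ and read off the bracket series term by term. The closing consistency check via Rule $E_1$ is a nice extra observation, but the core argument coincides with the paper's one-line proof.
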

\begin{proof}
This follows simply 
 from  expanding  $e^{-t}$ in power series in the 
integral representation of the gamma function to obtain 
\begin{equation}
\Gamma(\alpha)  =  \int_{0}^{\infty} t^{\alpha-1} e^{-t} \, dt 
 =  \int_{0}^{\infty} t^{\alpha - 1} \sum_{n=0}^{\infty} \frac{(-1)^{n}}{n!} t^{n} \, dt 
 =  \sum_{n=0}^{\infty} \phi_{n} \langle \alpha + n \rangle.
\end{equation}
\end{proof}

In this section we present a systematic procedure to evaluate Mellin-Barnes integrals of the form 
\begin{equation}
\label{mellin-int0}
I(x) = \frac{1}{2 \pi i } \int_{\gamma} x^{-s} \frac{ 
\prod\limits_{j=1}^{M} \Gamma(a_{j} + A_{j}s) \prod\limits_{j=1}^{P} \Gamma(c_{j} - C_{j}s)
}
{
\prod\limits_{j=1}^{N} \Gamma(b_{j} + B_{j}s) \prod\limits_{j=1}^{Q} \Gamma(d_{j} - D_{j} s) 
} \, ds
\end{equation}
\noindent
where $\gamma$ is the usual  vertical line contour. A similar argument has appeared in \cite{prausa-2017a}.  The idea is
 to use the method of brackets to 
produce the bracket series associated to \eqref{mellin-int0}. The parameters satisfy the rules 
$a_{j}, \, b_{j}, \, c_{j}, \, d_{j} \in \mathbb{C}$ and $A_{j}, \, B_{j}, \, C_{j}, \, D_{j} \in \mathbb{R}^{+}$
for the index $j$ in the corresponding range; for instance, the $j$ associated to $a_{j}, \, A_{j}$ vary 
from $1$ to $M$. 

The procedure to obtain the bracket series is systematic: the gamma factors in the numerator 
are replaced using formula \eqref{gamma-brack1} (the gamma factors in the denominator do not 
contribute):
\begin{eqnarray}
\prod_{j=1}^{M} \Gamma(a_{j}+A_{j}s) & = & \prod_{j=1}^{M} \left[ \sum_{k_{j}} \phi_{k_{j}} \langle a_{j} + A_{j} s + k_{j}  \rangle \right] 
\label{gamma-brack2} \\
& = & \sum_{k_{1}} \cdots \sum_{k_{M}} \phi_{k_{1} \cdots k_{M}} \prod_{j=1}^{M} \langle a_{j} + A_{j} s + k_{j} \rangle  \nonumber 
\end{eqnarray}
\noindent
and similarly 
\begin{eqnarray}
\prod_{j=1}^{P} \Gamma(c_{j}-C_{j}s) & = & \prod_{j=1}^{P} \left[ \sum_{\ell_{j}} \phi_{\ell_{j}} \langle c_{j} - C_{j} s + \ell_{j} \rangle \right] 
\label{gamma-brack3} \\
& = & \sum_{\ell_{1}} \cdots \sum_{\ell_{P}} \phi_{\ell_{1} \cdots \ell_{P}} \prod_{j=1}^{P} \langle c_{j} - C_{j} s + \ell_{j} \rangle.  \nonumber 
\end{eqnarray}

The rules of the method of brackets described in Section \ref{sec-rules} now yield a bracket series associated with the integral 
\eqref{mellin-int0}.  To illustrate these idea, introduce the function 
\begin{equation}
G(s) = \frac{1}{\prod\limits_{j=1}^{N} \Gamma(b_{j} + B_{j}s)} \times  \frac{1}{\prod\limits_{j=1}^{Q} \Gamma(d_{j} - D_{j}s)},
\end{equation}
\noindent
The previous rules transform the integral $I(x)$ in \eqref{mellin-int0} to 
\begin{eqnarray}
I(x) & = & \frac{1}{2 \pi i } \sum_{k_{1}} \cdots \sum_{k_{M}} \sum_{\ell_{1}} \cdots \sum_{\ell_{P}} 
\phi_{k_{1} \cdots k_{M} \ell_{1} \cdots \ell_{P}} \\
&& \times \int_{\gamma} x^{-s} G(s) \left[ \prod_{j=1}^{M} \langle a_{j} + A_{j}s + k_{j} \rangle \right]
\,  \left[ \prod_{j=1}^{P} \langle c_{j} - C_{j}s + \ell_{j} \rangle  \right] \, ds. \nonumber 
\end{eqnarray}
this will be written in the more compact form 
\begin{equation}
I(x) = \frac{1}{2 \pi i } \sum_{\{ k \} } \sum_{\{ \ell \}} \phi_{\{k\}, \, \{\ell \}}  \int_{\gamma} x^{-s} G(s) 
\left[ \prod_{j=1}^{M} \langle a_{j} + A_{j}s + k_{j} \rangle \right]
\,  \left[ \prod_{j=1}^{P} \langle c_{j} - C_{j}s + \ell_{j} \rangle  \right] \, ds. \label{int-mellin1}
\end{equation}

Now select the bracket $\langle a_{M} + A_{M}s + k_{M} \rangle $ to evaluate the integral
 \eqref{int-mellin1}. Any other choice of bracket gives an equivalent  value for $I(x)$. Start with 
\begin{multline}
\label{int-mellin2} 
I(x) =  \sum_{\{ k \} } \sum_{\{ \ell \}} 
\phi_{ \{ k \}, \{ \ell \}}
\int_{\gamma} x^{-s} G(s)  \\ 
\left[ \prod_{j=1}^{M-1} \langle a_{j} + A_{j}s + k_{j} \rangle \right]
\,  \left[ \prod_{j=1}^{P} \langle c_{j} - C_{j}s + \ell_{j} \rangle  \right] \, 
\frac{\langle a_{M} + A_{M} s + k_{M} \rangle}{2 \pi i } \, ds. \nonumber 
\end{multline}
\noindent
The rules of the method of brackets given  requires to solve the linear equation coming from the vanishing of the last bracket and using Rule \ref{thm-fun1}.
This produces 
\begin{equation}
s^{*} = - \frac{a_{M} + k_{M}}{A_{M}}.
\end{equation}
\noindent
Therefore
\begin{multline}
\label{int-mellin3} 
I(x) = \frac{1}{|A_{M}|}  \sum_{\{ k \} } \sum_{\{ \ell \}} \phi_{ \{ k \}, \{ \ell \}}
 x^{-s^{*}} G(s^{*})  
\prod_{j=1}^{M-1} \langle a_{j} + A_{j}s^{*} + k_{j} \rangle
\prod_{j=1}^{P} \langle c_{j} - C_{j}s^{*} + \ell_{j} \rangle 
\end{multline}
\noindent
Therefore, the value of the integral $I(x)$ obtained from the selection of the 
the bracket $\langle a_{M} + A_{M}s + k_{M} \rangle $ is given by 
\begin{multline}
I(x) = \frac{x^{a_{M}/A_{M}} }{|A_{M} |} \sum_{\{ k \}} \sum_{\{\ell \}} \phi_{\{ k \}, \{ \ell \}} x^{k_{M}/A_{M}} \\
\frac{
\prod\limits_{j=1}^{M-1} \left \langle a_{j} - \frac{A_{j} a_{M} }{A_{M} } - \frac{A_{j} k_{M} } {A_{M} }   + k_{j}  \right \rangle
\prod\limits_{j=1}^{P} \left \langle c_{j} + \frac{C_{j} a_{M} }{A_{M} } + \frac{C_{j} k_{M} } {A_{M} }   + \ell_{j}  \right \rangle
}
{
\prod\limits_{j=1}^{N} \left \langle b_{j} - \frac{B_{j} a_{M} }{A_{M} } - \frac{B_{j} k_{M} } {A_{M} }   \right \rangle
\prod\limits_{j=1}^{Q} \left \langle d_{j} + \frac{D_{j} a_{M} }{A_{M} } + \frac{D_{j} k_{M} } {A_{M} }   \right \rangle
}.
\end{multline}

Observe that one obtains a total of $M+P$ series representations for the integral $I(x)$. There are $P$ of 
them in the argument $x^{-1/C_{j}}$ and the remaining $M$ of them in the argument $x^{1/A_{j}}$. This 
procedure extends without difficulty to multiple integrals. 

\medskip

These ideas are  illustrated next.

\begin{example}
The hypergeometric function ${_{2}}F_{1}$, defined by the series 
\begin{equation}
\label{hyper-1}
\pFq21{a,b}{c}{x}  = \sum_{n=0}^{\infty} \frac{(a)_{n} (b)_{n}}{(c)_{n} \, n!} x^{n},
\end{equation}
for $|x| < 1$, admits the Mellin-Barnes representation 
\begin{equation}
\label{barnes-hyper1}
\pFq21{a,b}{c}{x} = \frac{\Gamma(c)}{\Gamma(a) \Gamma(b)} 
\frac{1}{2 \pi i } \int_{\gamma} \frac{\Gamma(-s) \Gamma(s+a) \Gamma(s+b)}{\Gamma(s+c)} 
(-x)^{s} \, ds
\end{equation}
\noindent
as a contour integral. This appears as entry $9.113$ in \cite{gradshteyn-2015a}.  

The integral in \eqref{barnes-hyper1} is now used to 
obtain the series representation \eqref{hyper-1}. As an added consequence, this 
method will also produce an analytic continuation of the series \eqref{hyper-1} to the domain $|x|>1$. 

The starting point is now the right-hand side of   \eqref{barnes-hyper1}
\begin{equation}
G(a,b,c;x) = 
\frac{\Gamma(c)}{\Gamma(a) \Gamma(b)} 
\frac{1}{2 \pi i } \int_{\gamma} \frac{\Gamma(-s) \Gamma(s+a) \Gamma(s+b)}{\Gamma(s+c)} 
(-x)^{s} \, ds
\end{equation}
\noindent
and using \eqref{gamma-brack1} in the three gamma factors yields
\begin{equation}
\label{form-G}
G(a,b,c;x) = \frac{\Gamma(c)}{\Gamma(a) \Gamma(b)} 
\frac{1}{2 \pi i } \sum_{n_{1},n_{2},n_{3}} \phi_{123} 
\int_{\gamma} 
\frac{(-x)^{s} \langle -s+n_{1} \rangle \langle s+ a + n_{2} \rangle \langle s+b + n_{3} \rangle}{\Gamma(s+c)} ds.
\end{equation}
\noindent
The gamma term in the denominator has no poles, so it is not expanded. 

In order to evaluate the expression \eqref{form-G}, select the bracket containing the index $n_{1}$ 
 and use Theorem
\ref{thm-fun1} to obtain the  bracket series:
\begin{equation}
\label{value-G}
G(a,b,c;x) = \frac{\Gamma(c)}{\Gamma(a) \Gamma(b)} \sum_{n_{1},n_{2},n_{3}} 
\phi_{123} \frac{(-x)^{n_{1}}}{\Gamma(n_{1}+c)} \langle a + n_{1} + n_{2} \rangle 
\langle b+n_{1}+n_{3} \rangle.
\end{equation}
\noindent
The evaluation of this series is done according to the rules given in Section \ref{sec-rules}. 

\smallskip 

\noindent
\texttt{Take $n_{1}$  as the free index}.  Then the indices $n_{2}, \, n_{3}$ are determined by the system 
\begin{equation}
a+n_{1}+n_{2} = 0 \quad \text{ and } \quad b+n_{1}+n_{3} = 0,
\end{equation}
\noindent
which  gives $n_{2} = - a - n_{1}$ and $n_{3} = -b-n_{1}$. Then \eqref{mult-sum} produces 
\begin{equation}
G_{1}(a,b,c;x) = \frac{\Gamma(c)}{\Gamma(a) \Gamma(b)} \sum_{n_{1}=0}^{\infty} \phi_{1}
\frac{(-x)^{n_{1}}}{\Gamma(n_{1}+c)} \Gamma(a+n_{1}) \Gamma(b+n_{1}),
\end{equation}
\noindent
where the index on $G_{1}$ is used to indicate that this sum comes from the free index $n_{1}$. This reduces
 to \eqref{hyper-1}, showing that 
\begin{equation}
G_{1}(a,b,c;x) =   \sum_{n=0}^{\infty} \frac{(a)_{n} (b)_{n}}{(c)_{n} \, n!} z^{n}.
\end{equation}
\noindent 
The series on the right converges for $|x|<1$.  This recovers equation \eqref{hyper-1}. 

\smallskip 

\noindent
\texttt{Take $n_{2}$  as the free index}.  Then the vanishing of the brackets give $n_{1} = -a - n_{2}$ and 
$n_{3} = -b+a+n_{2}$. Then 
\begin{equation}
\label{series-3}
G_{2}(a,b,c;x) = \frac{\Gamma(c)}{\Gamma(a) \Gamma(b)} 
\sum_{n_{2}=0}^{\infty} \phi_{2} \frac{(-x)^{-a-n_{2}}}{\Gamma(-a-n_{2}+c)}
\Gamma(a+n_{2}) \Gamma(b-a-n_{2}).
\end{equation}
\noindent
Using $\Gamma(u+m) = \Gamma(u) (u)_{m}$ converts \eqref{series-3} into 
\begin{equation}
\label{series-4}
G_{2}(a,b,c;x) = \frac{\Gamma(c) \Gamma(b-a)}{ \Gamma(b) \Gamma(c-a)} 
\sum_{n_{2}=0}^{\infty} \phi_{2} \frac{(-x)^{-a-n_{2}}}{ (c-a)_{-n_{2}}} 
(a)_{n_{2}} (b-a)_{-n_{2}}.
\end{equation}
\noindent
The final step uses the transformation rule 
\begin{equation}
(u)_{-n}  = \frac{(-1)^{n}}{(1-u)_{n}}
\end{equation}
\noindent
to eliminate the negative indices on the Pochhammer symbols and convert \eqref{series-4} into 
\begin{eqnarray}
\label{series-5}
\quad G_{2}(a,b,c;x) & = &  \frac{\Gamma(c) \Gamma(b-a)}{ \Gamma(b) \Gamma(c-a)} 
\sum_{n_{2}=0}^{\infty} \phi_{2} \frac{(-x)^{-a-n_{2}}}{ (1-b+a)_{n_{2}}} 
(a)_{n_{2} } (1-c+a)_{n_{2}} \\
& = & (-x)^{-a}  \frac{\Gamma(c) \Gamma(b-a)}{\Gamma(b) \Gamma(c-a)} 
\sum_{n_{2}=0}^{\infty} \frac{(a)_{n_{2}} (1-c+a)_{n_{2}}}{(1-b+a)_{n_{2}} \, n_{2}!} x^{-n_{2}}. \nonumber
\end{eqnarray}
\noindent
The series on the right is identified as a hypergeometric series and it yields 
\begin{equation}
G_{2}(a,b,c;x) = (-x)^{-a} \frac{\Gamma(c) \Gamma(b-a)}{\Gamma(b) \Gamma(c-a)} 
\pFq21{a \,\,\,\, \,\, 1-c+a}{1-b+a}{\frac{1}{x}}
\end{equation}
\noindent
and this series converges for $|x|>1$.

\smallskip 

\noindent
\texttt{Finally take  $n_{3}$  as the free index}. This case is similar to the previous one and it produces 
\begin{equation}
G_{3}(a,b,c;x) = (-x)^{-b} \frac{\Gamma(c) \Gamma(a-b)}{\Gamma(a) \Gamma(c-b)} 
\pFq21{b \,\,\,\,\,\, 1-c+b}{1-a+b}{\frac{1}{x}}
\end{equation}
\noindent
and this series also converges for $|x|>1$.

The rules in Section \ref{sec-rules} state that if in the evaluation of an integral  one obtains a collection of 
series,  coming from choices of free indices, those converging in a common region must be added. Thus, the 
integral $G(a,b,c;x)$ in \eqref{value-G} has the representations
\begin{equation}
G(a,b,c;x)  = \pFq21{a \,\, b}{c}{x} \quad \text{for} \,\, |x| < 1 
\end{equation}
\noindent
and 
\begin{multline}
G(a,b,c;x) = 
 (-x)^{-a} \frac{\Gamma(c) \Gamma(b-a)}{\Gamma(b) \Gamma(c-a)} 
\pFq21{a \,\,\,\, \,\, 1-c+a}{1-b+a}{\frac{1}{x}}  \label{analytic1} \\ + 
(-x)^{-b} \frac{\Gamma(c) \Gamma(a-b)}{\Gamma(a) \Gamma(c-b)} 
\pFq21{b \,\,\,\,\,\, 1-c+b}{1-a+b}{\frac{1}{x}}, \quad \text{for} \,\, |x|>1.
\end{multline}

Therefore we have obtained an analytic continuation of the hypergeometric function $_{2}F_{1}(a,b,c;x)$ from 
$|x|<1$ to the exterior of the unit circle. The identity \eqref{analytic1} 
appears as entry $9.132.2$ in \cite{gradshteyn-2015a}. 
\end{example}

\section{Inverse Mellin transforms}
\label{sec-MB-IMT}

The method of brackets is now used to evaluate integrals of the form \eqref{mellin-2}
\begin{equation}
f(x) = \frac{1}{2 \pi i } \int_{\gamma} x^{-s} \varphi(s) \, ds,
\end{equation}
\noindent
where $\gamma$ is a vertical line on $\mathbb{C}$, adjusted to each problem. Given $\varphi(s)$, the function
 $f(x)$ has Mellin transform $\varphi$.

\begin{example}
\label{example-1}
Consider the function  $\varphi(s) = \Gamma(s-a).$ Its inverse Mellin transform is given by
\begin{equation}
\label{ex-1}
f(x)= \frac{1}{2 \pi i} \int_{\gamma}  x^{-s} \Gamma(s-a) \, ds.
\end{equation}
\noindent
Now use \eqref{gamma-brack1} to write 
\begin{equation}
\Gamma(s-a) = \sum_{n} \phi_{n} \langle s - a + n \rangle
\end{equation}
\noindent
and \eqref{ex-1} yields 
\begin{equation}
f(x) = \sum_{n} \phi_{n} \frac{1}{2 \pi i } 
\int_{\gamma} x^{-s} \langle s-a+n \rangle \, ds
\end{equation}
\noindent
Theorem \ref{thm-fun1} now gives 
\begin{equation}
f(x)= \sum_{n} \phi_{n} x^{n-a} = x^{-a}e^{-x}.
\end{equation}
\noindent
This is written as 
\begin{equation}
\frac{1}{2 \pi i } \int_{\gamma}  x^{-s} \Gamma(s-a) \, ds = x^{-a} e^{-x},
\end{equation}
\noindent
or equivalently 
\begin{equation}
\int_{0}^{\infty} x^{s-a-1}e^{-x} \, dx = \Gamma(s-a).
\end{equation}
\noindent
Replacing $s-a$ by $s$, this is the integral definition of the gamma function.
\end{example}

\begin{example}
\label{example-1a}
The inverse Mellin transform of $\varphi(s) = \Gamma(a-s)$ is obtained as in Example \ref{example-1}. The result is 
\begin{equation}
f(x) = x^{-a} e^{-1/x},
\end{equation}
\noindent
also written as 
\begin{equation}
\frac{1}{2 \pi i } \int_{\gamma} x^{-s} \Gamma(a-s) \, ds = x^{-a} e^{-1/x},
\end{equation}
\noindent
or equivalently
\begin{equation}
\int_{0}^{\infty} x^{s-1}x^{-a}e^{-1/x} \, dx = \Gamma(a-s).
\end{equation}
\noindent
The change of variables  $u = x^{-1}$ gives the integral representation of the gamma function.
\end{example}

\begin{example}
\label{example-2}
The  inversion of $\varphi(s) = \Gamma(s-a) \Gamma(s-b)$ amounts to the 
evaluation of the line integral
\begin{equation}
\label{ex-2}
f(x)= \mathcal{M}^{-1} (\Gamma(s-a) \Gamma(s-b))(x) = 
\frac{1}{2 \pi i} \int_{\gamma} x^{-s} \Gamma(s-a) \Gamma(s-b) \, ds.
\end{equation}
\noindent
Now use \eqref{gamma-brack1} to write 
\begin{equation}
\Gamma(s-a) = \sum_{n_{1}} \phi_{n_{1}} \langle s - a + n_{1} \rangle \quad \text{and} \quad
\Gamma(s-b) = \sum_{n_{2}} \phi_{n_{2}} \langle s - b + n_{2} \rangle 
\end{equation}
\noindent
and produce
\begin{equation}
\label{series-n1}
f(x) = \frac{1}{2 \pi  i} \int_{\gamma} x^{-s} \sum_{n_{1},n_{2}} \phi_{12} 
\langle s-a+n_{1} \rangle \langle s - b + n_{2} \rangle.
\end{equation}
\noindent
Now select the bracket containing the index  $n_{1}$ and write \eqref{series-n1} using
 Theorem \ref{thm-fun1} as 
\begin{eqnarray}
f(x) & = &   \sum_{n_{1},n_{2}} \phi_{12} \frac{1}{2 \pi i } \int_{\gamma}  \left( x^{-s} \langle s - b + n_{2} \rangle \right) 
\langle s - a + n_{1} \rangle \\
& = & \sum_{n_{1}, n_{2} } \phi_{12} \, x^{-a+n_{1} } \langle a-n_{1} - b + n_{2} \rangle. \nonumber
\end{eqnarray}
\noindent
This is a two-dimensional bracket series and its evaluation is achieved using the rules in 
Section \ref{sec-rules}: \\

\noindent
\texttt{$n_{1}$ is a free index}. Then $n_{2} = n_{1}-a+b$ and this produces  the value 
\begin{eqnarray}
f_{1}(x) & = & \sum_{n_{1}=0}^{\infty} \phi_{1} x^{-a+n_{1}} \Gamma(-n_{1}+a-b) \\
& = & 
x^{-a}  \Gamma(a-b) \sum_{n_{1}=0}^{\infty} \phi_{1} (a-b)_{-n_{1}}  \nonumber \\
& = & x^{-a} \Gamma(a-b) \sum_{n_{1}=0}^{\infty} \frac{x^{n_{1}}}{n_{1}! \, (1-a+b)_{n_{1}}} \nonumber \\
& = & x^{-a} \Gamma(a-b)\, \pFq01{-}{1-a+b}{x}. \nonumber 
\end{eqnarray}

\noindent
\texttt{$n_{2}$ is a free index}.  A similar argument gives 
\begin{equation}
f_{2}(x) = x^{-b} \Gamma(b-a) \, \pFq01{-}{1-b+a}{x}. 
\end{equation}

Since both representations always converge, one obtains 
\begin{equation}
\label{two-IBessel}
f(x) =  x^{-a} \Gamma(a-b)\, \pFq01{-}{1-a+b}{x} +  x^{-b} \Gamma(b-a) \, \pFq01{-}{1-b+a}{x}. 
\end{equation}

The function $_{0}F_{1}$ is now expressed in terms of the modified Bessel function $I_{\nu}(z)$. This is 
defined in \cite[10.25.2]{olver-2010a} by the power series
\begin{equation}
\label{mod-bes1}
I_{\nu}(z) = \left( \frac{z}{2} \right)^{\nu} \sum_{k=0}^{\infty} \frac{1}
{k! \, \Gamma(\nu+k+1)} \left( \frac{z^{2}}{4} \right)^{k}.
\end{equation}

\begin{lemma}
\label{lemma-1}
For $\alpha \in \mathbb{R}$, the identity 
\begin{equation} 
 \pFq01{-}{\alpha}{x}  = \Gamma(\alpha) x^{(1-\alpha)/2} I_{\alpha -1}(2 \sqrt{x})
 \end{equation}
 \noindent
 holds.
 \end{lemma}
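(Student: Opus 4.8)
The plan is to prove the identity by a direct, term-by-term comparison of the two defining power series, since both sides are entire functions of $x$ whose Taylor coefficients can be matched exactly. First I would recall the series defining the left-hand side, namely $\pFq01{-}{\alpha}{x} = \sum_{k \ge 0} \frac{x^{k}}{(\alpha)_{k}\, k!}$, and the series \eqref{mod-bes1} defining the modified Bessel function $I_{\nu}(z)$.

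The key computational step is to substitute $\nu = \alpha - 1$ and $z = 2\sqrt{x}$ into \eqref{mod-bes1}. Then $z/2 = \sqrt{x}$, so the prefactor becomes $(z/2)^{\nu} = x^{(\alpha-1)/2}$, while $z^{2}/4 = x$, so the summand argument $(z^{2}/4)^{k}$ becomes $x^{k}$. This gives
\[
I_{\alpha-1}(2\sqrt{x}) = x^{(\alpha-1)/2} \sum_{k \ge 0} \frac{x^{k}}{k!\, \Gamma(\alpha+k)}.
\]
Multiplying by the prefactor $\Gamma(\alpha)\, x^{(1-\alpha)/2}$ appearing on the right-hand side of the lemma, the two fractional powers of $x$ cancel, since $x^{(1-\alpha)/2}\, x^{(\alpha-1)/2} = 1$, leaving $\Gamma(\alpha) \sum_{k \ge 0} \frac{x^{k}}{k!\, \Gamma(\alpha+k)}$.

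The final step is to rewrite the coefficient using the identity $\Gamma(\alpha+k) = \Gamma(\alpha) (\alpha)_{k}$, which yields $\Gamma(\alpha)/\Gamma(\alpha+k) = 1/(\alpha)_{k}$, so that the series collapses to $\sum_{k \ge 0} \frac{x^{k}}{(\alpha)_{k}\, k!}$, which is precisely $\pFq01{-}{\alpha}{x}$. This establishes the equality.

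There is no genuine obstacle here; the argument is a routine identification of two convergent series. The only point requiring care is the range of validity: the manipulation presumes that $\alpha$ is not a non-positive integer, so that $\Gamma(\alpha)$ is finite and the Pochhammer symbols $(\alpha)_{k}$ do not vanish. For such $\alpha$ the identity holds as an equality of entire functions of $x$, and the general statement for $\alpha \in \mathbb{R}$ then follows by analytic continuation in the parameter $\alpha$ of both sides.
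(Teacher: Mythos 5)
Your proof is correct and follows essentially the same route as the paper, which simply notes that the identity "follows directly from" the series definition \eqref{mod-bes1} of $I_{\nu}(z)$; you have merely written out the substitution $\nu = \alpha-1$, $z = 2\sqrt{x}$ and the simplification $\Gamma(\alpha+k) = \Gamma(\alpha)(\alpha)_{k}$ that the paper leaves implicit. Your added remark about excluding non-positive integer $\alpha$ (where $\Gamma(\alpha)$ and $(\alpha)_{k}$ degenerate) is a sensible precision the paper omits.
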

 \begin{proof}
 This follows directly from \eqref{mod-bes1}.
 \end{proof}
 
 Replacing the expression in Lemma \ref{lemma-1}  in \eqref{two-IBessel} gives 
 \begin{equation}
 f(x) = \frac{\pi}{\sin(\pi (a-b))} x^{-(a+b)/2} \left( I_{b-a}(2 \sqrt{x}) - I_{a-b}(2 \sqrt{x}) \right).
 \end{equation}
 \noindent
 The relation \cite[10.27.4]{olver-2010a}
 \begin{equation}
 K_{\nu}(z) = \frac{\pi}{2} \frac{I_{-\nu}(z) - I_{\nu}(z)}{\sin( \pi \nu))} 
 \end{equation}
 \noindent
 (which is  here as the definition of $K_{\nu}(z)$) now  implies 
 \begin{equation}
 f(x) = 2 x^{-\nu/2 - b} K_{\nu}(2 \sqrt{x});
 \end{equation}
 \noindent
 with $\nu = a-b$.  This is 
 \begin{equation}
 \label{mellin-ex1}
 \frac{1}{2 \pi i } \int_{\gamma} x^{-s} \Gamma(s-a) \Gamma(s-b) \, ds = 
 2x^{-(a+b)/2} K_{a-b}(2 \sqrt{x}).
 \end{equation}
 \noindent
 After some elementary changes, this is written as 
 \begin{equation}
\label{mellin-bes1}
K_{\nu}(x) = \frac{1}{4 \pi i } \left( \frac{x}{2} \right)^{\nu} 
\int_{\gamma} \Gamma(s) \Gamma(s- \nu) \left( \frac{x}{2} \right)^{-2s} \, ds,
\end{equation}
\noindent
the form appearing in  \cite[10.32.13]{olver-2010a}.  The expression \eqref{mellin-bes1} is 
now written in the equivalent form 
\begin{equation}
\label{bes-int-1}
\int_{0}^{\infty}x^{s-1} K_{\nu}(x) \, dx = 2^{s-2} \Gamma \left( \frac{s+\nu}{2} \right) 
\Gamma \left( \frac{s-\nu}{2} \right).
\end{equation}
 \end{example}

\begin{example}
\label{example-3}
Consider the  inversion of $\varphi(s) = \Gamma(s-a) \Gamma(b-s)$. Observe that there is a change 
in the order of the argument of the second gamma factor with respect to Example \ref{example-2}. To evaluate 
this example, expand the term $\Gamma(s-a)$ in a bracket series using \eqref{gamma-brack1} to obtain 
\begin{equation}
f(x) = \sum_{n} \phi_{n} \frac{1}{2 \pi i } \int_{\gamma} x^{-s} \langle s - a + n \rangle \Gamma(b-s) \, ds.
\end{equation}
\noindent
Theorem \ref{thm-fun1} yields 
\begin{equation}
f(x) = x^{-a} \sum_{n=0}^{\infty} \phi_{n} \Gamma(b-a+n) x^{n}.
\end{equation}
\noindent
To simplify this answer write $\Gamma(b-a+n)= \Gamma(b-a) (b-a)_{n}$, use 
\begin{equation}
\pFq10{\alpha}{-}{x} = (1-x)^{-\alpha}
\end{equation}
\noindent
and conclude that 
\begin{equation}
f(x) = \frac{\Gamma(b-a)}{x^{a}(1+x)^{b-a}}.
\end{equation}
\noindent
This is equivalent to the evaluation 
\begin{equation}
\frac{1}{2 \pi i } \int_{\gamma} x^{-s} \Gamma(s-a) \Gamma(b-s) \, ds = 
\frac{\Gamma(b-a)}{x^{a}(1+x)^{b-a}}.
\end{equation}
\noindent
Expanding the other gamma factors produces the same analytic expression for the integral.
\end{example}

\begin{example}
\label{example-4}
This example considers the simplest case of an integrand where a quotient of gamma factors appears. This is the 
inversion of 
\begin{equation}
\varphi(s)  = \frac{\Gamma(s-a)}{\Gamma(s-b)}.
\end{equation}
\noindent
The usual formulation now gives 
\begin{equation}
f(x) = \sum_{n} \phi_{n} \frac{1}{2 \pi i } \int_{\gamma} \left( \frac{x^{-s}}{\Gamma(s-b)} \right) 
\langle s-a+n \rangle \, ds.
\end{equation}
\noindent
Theorem \ref{thm-fun1} now yields 
\begin{equation}
f(x) = \sum_{n=0}^{\infty} \phi_{n} \frac{x^{n-a}}{\Gamma(a-b-n)}.
\end{equation}
\noindent
This expression is simplified using 
\begin{equation}
\Gamma(a-b-n) = \Gamma(a-b) (a-b)_{-n} = (-1)^{n}  \frac{\Gamma(a-b)}{(1-a+b)_{n}}
\end{equation}
\noindent
to obtain 
\begin{eqnarray}
f(x) & = & \frac{x^{-a}}{\Gamma(a-b)} \sum_{n=0}^{\infty} \frac{(1-a+b)_{n}}{n!} x^{n} \\
& = &  \frac{1}{\Gamma(a-b)} x^{-a} (1-x)^{-1+a-b}. \nonumber
\end{eqnarray}
\noindent
This can be written as 
\begin{equation}
\frac{1}{2 \pi i } \int_{\gamma} x^{-s} \frac{\Gamma(s-a)}{\Gamma(s-b)} \, ds = 
\frac{1}{\Gamma(a-b)} x^{-a}(1-x)^{-1+a-b}.
\end{equation}
\end{example}

\begin{example}
\label{example-5}
The inverse Mellin transform $f(x)$ of 
\begin{equation}
\varphi(s) = \frac{\Gamma(s-a)}{\Gamma(b-s)}
\end{equation}
\noindent
is computed from the line integral
\begin{equation}
f(x) = \frac{1}{2 \pi i } \int_{\gamma} x^{-s}  \frac{\Gamma(s-a)}{\Gamma(b-s)}  \, ds.
\end{equation}
\noindent
The usual procedure now yields 
\begin{equation}
f(x) = \sum_{n=0}^{\infty}  (-1)^{n} \frac{x^{n-a}}{n! \, \Gamma(b-a+n)}  = 
\frac{x^{-a}}{\Gamma(b-a)} \sum_{n=0}^{\infty} \frac{(-1)^{n}}{n! \, (b-a)_{n}} x^{n}.
\end{equation}
\noindent
The series is identified as an ${_{0}F_{1}}$ and using the identity 
\begin{equation}
J_{\nu}(z) = \frac{1}{\Gamma(\nu+1)} \left( \frac{z}{2} \right)^{\nu} 
\pFq01{-}{\nu+1}{- \frac{z^{2}}{4}}  
\end{equation}
\noindent
produces 
\begin{equation}
f(x) = x^{(1-a-b)/2} J_{-1-a+b}( 2 \sqrt{x})
\end{equation}
\noindent
and gives the evaluation
\begin{equation}
\frac{1}{2 \pi i } \int_{\gamma} x^{-s}  \frac{\Gamma(s-a)}{\Gamma(b-s)}  \, ds = x^{(1-a-b)/2} J_{-1-a+b}( 2 \sqrt{x}).
\end{equation}
\end{example}

\begin{example}
\label{example-6}
The inverse Mellin transform $f(x)$ of
\begin{equation}
\varphi(s) = \frac{\Gamma(a-s)}{\Gamma(s-b)}
\end{equation}
\noindent
is computed as in Example \ref{example-5}. The result is 
\begin{equation}
\label{bessel-mellin1}
\frac{1}{2 \pi i } \int_{\gamma} x^{-s} \frac{\Gamma(a-s)}{\Gamma(s-b)} \, ds = 
x^{-(a+b+1)/2} J_{a-b-1} \left( \frac{2}{\sqrt{x}} \right)
\end{equation}
\noindent
and thus 
\begin{equation}
\int_{0}^{\infty} x^{s-1} x^{-(a+b+1)/2} J_{a-b-1}  \left( \frac{2}{\sqrt{x}} \right) \, dx = 
\frac{\Gamma(a-s)}{\Gamma(s-b)}.
\end{equation}
\noindent
The identity \eqref{bessel-mellin1}, with $a=0$ and $b=-\nu-1$,  can be written as 
\begin{equation}
\label{mellin-bes3}
J_{\nu}(x) = \frac{1}{2 \pi i } \int_{\gamma} \frac{\Gamma(-s)}{\Gamma(s+ \nu +1)} 
\left( \frac{x}{2} \right)^{2s + \nu} \, ds.
\end{equation}
\end{example}

\begin{example}
The Mellin  inversion of the function 
\begin{equation}
\varphi(s) = \frac{\Gamma(s) \Gamma(1-s)}{\Gamma(\beta - \alpha s)}
\end{equation}
\noindent
is given by 
\begin{equation}
f(x) =  \frac{1}{2 \pi i} \int_{\gamma} x^{-s}  \frac{\Gamma(s) \Gamma(1-s)}{\Gamma(\beta - \alpha s)} \, ds.
\end{equation}
\noindent
The standard procedure gives 
\begin{equation}
f(x) = \sum_{n_{1},n_{2}} \phi_{12} \, \frac{1}{2 \pi i } \int_{\gamma}  
\left( \frac{\langle 1 - s + n_{2} \rangle x^{-s}}{\Gamma(\beta - \alpha s)} \right) \, \langle s + n_{1} \rangle \, ds.
\end{equation}
\noindent
Theorem \ref{thm-fun1} then produces 
\begin{equation}
f(x) = \sum_{n_{1},n_{2}} \phi_{12} \frac{x^{n_{1}}}{\Gamma(\beta+ \alpha n_{1})}
\langle 1 + n_{1}+n_{2} \rangle.
\end{equation}
\noindent
To evaluate this two-dimensional bracket series proceed as in Example \ref{example-2}. This gives 
\begin{equation}
\label{ml-1}
f(x) = \sum_{n=0}^{\infty} \frac{(-x)^{n} }{\Gamma(\beta + \alpha n)} \quad \text{when} \,\, |x| < 1
\end{equation}
\noindent
and 
\begin{equation}
\label{ml-2}
f(x) = \frac{1}{x} \sum_{n=0}^{\infty} \frac{1}{\Gamma(\beta - \alpha - \alpha n)} \frac{(-1)^{n}}{x^{n}} 
 \quad \text{when} \,\, |x| > 1.
 \end{equation}
 \noindent
 The function appearing in \eqref{ml-1} is the \texttt{Mittag-Leffler function}, defined in \cite{olver-2010a} by 
 \begin{equation}
 E_{\alpha,\beta}(z) = \sum_{n=0}^{\infty} \frac{z^{n}}{\Gamma(\alpha n + \beta)}.
 \label{ml-def}
 \end{equation}
 \noindent 
 produces the final expression 
 \begin{equation}
 f(x) = \begin{cases}
 E_{\alpha, \beta}(-x) & \quad \text{if} \,\, |x| < 1 \\
 x^{-1} E_{-\alpha, \beta-\alpha}(-1/x) & \quad \text{if} \,\, |x|>1.
 \end{cases}
 \end{equation}
\end{example}

\section{Direct computations of Mellin transforms}
\label{sec-direct}

This section describes how to use the method of brackets to produce the evaluation of the Mellin transform
\begin{equation}
\mathcal{M}(f(x))(s) = \int_{0}^{\infty} x^{s-1} f(x) \, dx.
\end{equation}

\begin{example}
\label{example-int1}
Example \ref{example-2} has produced the evaluation of 
\begin{equation}
\label{bes-int-2}
\int_{0}^{\infty}x^{\alpha-1} K_{\nu}(x) \, dx = 2^{\alpha-1} \Gamma \left( \frac{\alpha+\nu}{2} \right) 
\Gamma \left( \frac{\alpha-\nu}{2} \right),
\end{equation}
\noindent
from the Mellin inversion of $\Gamma(s-a)\Gamma(s-b)$.
\end{example}

\begin{example}
\label{example-int2}
The next example evaluates 
\begin{equation}
\label{int-mess1}
I(\alpha,\mu,\nu)  = \int_{0}^{\infty} x^{\alpha-1} K_{\mu}(x) K_{\nu}(x) \, dx.
\end{equation}
by the methods developed here. 

Entry $10.32.19$ in \cite{olver-2010a} contains the representation 
\begin{multline}
K_{\mu}(x)K_{\nu}(x) = \\ \frac{1}{8 \pi i } \int_{\gamma} 
\left( \frac{x}{2} \right)^{-2s} \frac{1}{\Gamma(2s)} 
\Gamma\left( s + \frac{\mu+\nu}{2} \right) 
\Gamma\left( s + \frac{\mu-\nu}{2} \right) 
\Gamma\left( s - \frac{\mu-\nu}{2} \right) 
\Gamma\left( s - \frac{\mu+\nu}{2} \right) 
\, ds
\end{multline}
\noindent
Replacing this in \eqref{int-mess1} and identifying the $x$-integral as a bracket, yields 
\begin{multline}
I(\alpha,\mu,\nu)  =  \frac{1}{8 \pi i } \int_{\gamma} 
 \frac{2^{2s} }{\Gamma(2s)} 
\Gamma\left( s + \frac{\mu+\nu}{2} \right) 
\Gamma\left( s + \frac{\mu-\nu}{2} \right) \\
\Gamma\left( s - \frac{\mu-\nu}{2} \right) 
\Gamma\left( s - \frac{\mu+\nu}{2} \right) 
  \langle \alpha - 2s \rangle
\, ds
\end{multline}
\noindent
Since this problem contains one bracket and one contour integral, there is no need to expand the gamma 
terms in bracket series and the result is obtained directly from Theorem \ref{thm-fun1}. The result is 
\begin{multline}
 \int_{0}^{\infty} x^{\alpha-1} K_{\mu}(x) K_{\nu}(x) \, dx  \\ =
 \frac{2^{\alpha-3}}{\Gamma(\alpha)} 
\Gamma \left( \frac{\alpha + \mu + \nu}{2} \right) 
\Gamma \left( \frac{\alpha + \mu - \nu}{2} \right) 
\Gamma \left( \frac{\alpha - \mu + \nu}{2} \right) 
\Gamma \left( \frac{\alpha - \mu - \nu}{2} \right).
\end{multline}
\end{example}

\begin{example}
\label{example-int3}
The evaluation of 
\begin{equation}
\int_{0}^{\infty} x^{2a-1} K_{\nu}^{2}(x) \, dx = 
 \frac{\sqrt{\pi}\, \Gamma(a+\nu) \Gamma(a-\nu) \Gamma(a)}{4\Gamma \left( a + \tfrac{1}{2} \right)} 
\end{equation}
\noindent
is the special case of Example \ref{example-int2} with $\mu = \nu$. Note that the parameter $a$ has been replaced
by $2a$, in order to write the answer in a more compact form. In particular, with $\nu=0$, this becomes 
\begin{equation}
\int_{0}^{\infty} x^{2a-1} K_{0}^{2}(x) \, dx = \frac{\sqrt{\pi} \Gamma^{3}(a)}{4 \, \Gamma \left(a + \tfrac{1}{2} \right)}.
\end{equation}
\noindent
The final special case mentioned here has $a = \tfrac{1}{2}$:
\begin{equation}
\int_{0}^{\infty} K_{0}^{2}(x) \, dx = \frac{\pi^{2}}{4}.
\end{equation}
\noindent
These examples have been evaluated in \cite{gonzalez-2017a} by a different procedure.
\end{example}

\begin{example}
Now consider the integral 
\begin{equation}
\varphi_{3}(a) = \int_{0}^{\infty} K_{0}^{3}(ax) \, dx
\end{equation}
\noindent
with an auxiliary parameter $a$ that naturally can be scaled out. 

The evaluation begins with a more general problem
\begin{equation}
I = I(a,b;\mu,\nu,\alpha)  = \int_{0}^{\infty} K_{\mu}(ax) K_{\nu}(ax) K_{\alpha}(bx) \, dx
\label{Kint-1}
\end{equation}
\noindent 
and then 
\begin{equation}
\label{Kint-2}
\varphi_{3}(a)  = \lim\limits_{\stackrel{\alpha = \mu = \nu \rightarrow 0}{b \rightarrow a}}  I(a,b;\mu,\nu,\alpha).
\end{equation}
\noindent
The Mellin-Barnes representations of the factors in the integrand 
\begin{multline}
\label{Kint-3}
K_{\mu}(ax) K_{\nu}(ax) = \\ \frac{1}{8 \pi i } \int_{\gamma} 
\Gamma \left( t + \frac{\mu + \nu}{2} \right) 
\Gamma \left( t + \frac{\mu - \nu}{2} \right) 
\Gamma \left( t - \frac{\mu + \nu}{2} \right) 
\Gamma \left( t - \frac{\mu - \nu}{2} \right) 
\left( \frac{ax}{2} \right)^{-2t} \frac{dt}{\Gamma(2t)},
\end{multline}
\noindent
and 
\begin{equation}
K_{\alpha}(bx) = \frac{1}{4 \pi i } \left( \frac{bx}{2} \right)^{\alpha} \int_{\gamma} \Gamma(s) \Gamma(s- \alpha) \left( \frac{bx}{2} \right)^{-2s} \, ds.
\label{Kint-4}
\end{equation}
\noindent
Replacing  in \eqref{Kint-1} gives 
\begin{multline}
\label{Kint-6}
I = \frac{1}{8(2 \pi i )^{2}} \int_{\gamma_{1}} \int_{\gamma_{2}} 
\frac{
\Gamma \left( t + \frac{\mu + \nu}{2} \right) 
\Gamma \left( t + \frac{\mu - \nu}{2} \right) 
\Gamma \left( t - \frac{\mu + \nu}{2} \right) 
\Gamma \left( t - \frac{\mu - \nu}{2} \right) 
\Gamma(s) \Gamma(s- \alpha)}
{a^{2t} b^{2s - \alpha} 2^{-2t - 2s + \alpha} \Gamma(2t)} \\
 \times \langle -2s - 2t + \alpha +1 \rangle \, dt \, ds.
\end{multline}
\noindent
Now replace the gamma factors in the denominator by their corresponding bracket series to obtain
\begin{eqnarray}
I & = & \frac{1}{8} \sum_{\{ n\}} \phi_{n_{1} \cdots n_{6}} 
\frac{1}{(2 \pi i)^{2}} 
\int_{\gamma_{1}} \int_{\gamma_{2}} 
\frac{1}{a^{2t} b^{2s - \alpha} 2^{-2t - 2s + \alpha} \Gamma(2t)} \label{Kint-7} \\
& \times & \langle t + \frac{\mu + \nu}{2} + n_{1} \rangle 
\langle t + \frac{\mu - \nu}{2} + n_{2} \rangle 
\langle t -  \frac{\mu + \nu}{2} + n_{3 } \rangle  \nonumber \\
& \times & \langle t - \frac{\mu - \nu}{2} + n_{4} \rangle 
\langle  s + n_{5} \rangle 
\langle s - \alpha + n_{6} \rangle \langle -2s - 2t + \alpha + 1 \rangle 
\, dt \, ds.   \nonumber 
\end{eqnarray}
\noindent
To evaluate this expression  choose to eliminate the sums with indices $n_{1}$ and $n_{5}$. The resulting 
sums now depend on four indices $n_{2}, \, n_{3}, \, n_{4}$ and $n_{6}$ and the variables of integration 
$t$ and $s$ take the values 
\begin{equation}
t^{*} = - \frac{\mu+\nu}{2} - n_{1} \quad \textnormal{and} \,\, s^{*} = - n_{5}.
\end{equation}
\noindent
This yields 
\begin{multline}
I = \frac{1}{8} \sum_{\{ n \}} \phi_{n_{2}n_{3}n_{4}n_{6}}  \times \\
\frac{ 
\langle t^{*} + \frac{\mu-\nu}{2} + n_{2} \rangle 
\langle t^{*} - \frac{\mu+\nu}{2} + n_{3}  \rangle 
\langle t^{*} - \frac{\mu - \nu}{2} + n_{4}  \rangle 
\langle s^{*} - \alpha + n_{6}   \rangle 
\langle -2s^{*} - 2t^{*} + \alpha + 1  \rangle 
}
{
a^{2t^{*}} b^{2s^{*} - \alpha} 2^{-2t^{*} - 2s^{*}+ \alpha} \Gamma(2t^{*})
}.
\end{multline}
\noindent
Under the assumption $|4a^{2}| < |b^{2}|$ the integral in \eqref{Kint-1} is expressed as 
\begin{equation}
I = I(a,b;\mu,\nu,\alpha)  = T_{1}+T_{2}+T_{3}+T_{4}
\label{expression-1}
\end{equation}
\noindent
with 
\begin{eqnarray}
\quad T_{1} & = & \frac{1}{8} \frac{a^{\mu+\nu}}{b^{1+ \mu + \nu}} \Gamma \left( \frac{1-\alpha + \mu + \nu}{2} \right) 
\Gamma \left( \frac{1 + \alpha + \mu + \nu}{2} \right) \Gamma(-\mu) \Gamma(-\nu) \label{Kint-11}  \\
& & \quad \quad \times \pFq43{1 + \tfrac{\mu+\nu}{2} \,\, \tfrac{1+ \mu + \nu}{2} \,\, \frac{1+ \alpha + \mu + \nu}{2} \,\, \frac{1 - \alpha + \mu + \nu}{2} }
{1+ \mu \,\, 1+ \nu \,\, 1+ \mu + \nu}{\frac{4a^{2}}{b^{2}}} \nonumber  \\
\quad T_{2} & = & \frac{1}{8} \frac{a^{\mu-\nu}}{b^{1+ \mu - \nu}} \Gamma \left( \frac{1-\alpha + \mu - \nu}{2} \right) 
\Gamma \left( \frac{1 + \alpha + \mu - \nu}{2} \right) \Gamma(-\mu) \Gamma(\nu) \nonumber  \\
& & \quad \quad \times \pFq43{1 + \tfrac{\mu-\nu}{2} \,\, \tfrac{1+ \mu - \nu}{2} \,\, \frac{1- \alpha + \mu - \nu}{2} \,\, \frac{1 + \alpha + \mu - \nu}{2} }
{1+ \mu \,\, 1- \nu \,\, 1+ \mu - \nu}{\frac{4a^{2}}{b^{2}}} \nonumber   \\
\quad T_{3} & = & \frac{1}{8} \frac{b^{\mu+\nu-1}}{a^{\mu + \nu}} \Gamma \left( \frac{1-\alpha - \mu - \nu}{2} \right) 
\Gamma \left( \frac{1 + \alpha - \mu - \nu}{2} \right) \Gamma(\mu) \Gamma(\nu) \nonumber  \\
& & \quad \quad \times \pFq43{1 - \tfrac{\mu+\nu}{2} \,\, \tfrac{1- \mu - \nu}{2} \,\, \frac{1-\alpha - \mu - \nu}{2} \,\, \frac{1 + \alpha - \mu - \nu}{2} }
{1- \mu \,\, 1- \nu \,\, 1- \mu - \nu}{\frac{4a^{2}}{b^{2}}} \nonumber   \\
\quad T_{4} & = & \frac{1}{8} \frac{b^{\mu-\nu-1}}{a^{\mu - \nu}} \Gamma \left( \frac{1-\alpha - \mu + \nu}{2} \right) 
\Gamma \left( \frac{1 + \alpha - \mu + \nu}{2} \right) \Gamma(\mu) \Gamma(-\nu) \nonumber  \\
& & \quad \quad \times \pFq43{1 - \tfrac{\mu-\nu}{2} \,\, \tfrac{1- \mu + \nu}{2} \,\, \frac{1-\alpha - \mu + \nu}{2} \,\, \frac{1 + \alpha - \mu + \nu}{2} }
{1- \mu \,\, 1+ \nu \,\, 1- \mu +  \nu}{\frac{4a^{2}}{b^{2}}}. \nonumber 
\end{eqnarray}
\noindent
Now passing to the limit as $\alpha, \, \nu, \, \mu  \rightarrow 0$ yields 
\begin{eqnarray}
\int_{0}^{\infty} K_{0}^{2}(ax) K_{0}(bx) \, dx & = & 
\lim\limits_{\mu \rightarrow 0} \frac{1}{8b} 
\left[ \left( \frac{a^{2}}{b^{2}} \right)^{\mu} \Gamma \left( \frac{1 + 2 \mu}{2} \right)^{2} \Gamma(-\mu)^{2} 
\pFq32{ \tfrac{1+2 \mu}{2} \,\, \tfrac{1+ 2 \mu}{2} \,\, \tfrac{1+ 2 \mu}{2}  }
{1+ \mu \,\, 1+ 2 \mu}{\frac{4a^{2}}{b^{2}}} \nonumber  \right.  \\
& & \quad \quad + 2 \pi \Gamma(-\mu) \Gamma(\mu) \pFq32{ \tfrac{1}{2} \,\,\, \tfrac{1}{2} \,\,\, \tfrac{1}{2} }{1+\mu \,\, 1 - \mu}  { \frac{4a^{2}}{b^{2}}}   \nonumber \\
& & \left. \quad \quad + \left( \frac{a^{2}}{b^{2}} \right)^{-\mu} \Gamma^{2} \left(  \tfrac{1- 2 \mu}{2} \right) \Gamma^{2}(\mu) 
\pFq32{ \tfrac{1- 2 \mu}{2} \,\, \tfrac{1- 2 \mu}{2} \,\, \tfrac{1 - 2 \mu}{2} }{1- \mu \,\,\, 1 - 2 \mu} { \frac{4a^{2}}{b^{2}}} 
\right] \nonumber 
\end{eqnarray}

In a similar form, in the case $|4a^{2}| >|b^{2}|$ one obtains 
\begin{equation}
I = I(a,b;\mu,\nu,\alpha)  = T_{5}+T_{6}
\label{expression-2}
\end{equation}
\noindent
with 
\begin{eqnarray}
\quad T_{5} & = & \frac{1}{8} \frac{b^{\alpha}}{a^{\alpha +1}}  \nonumber \\
& & \frac{\Gamma(- \alpha)}{\Gamma(\alpha+1)} 
\Gamma \left( \frac{1+\alpha - \mu + \nu}{2} \right) 
\Gamma \left( \frac{1 + \alpha - \mu -  \nu}{2} \right) 
\Gamma \left( \frac{1 + \alpha + \mu -  \nu}{2} \right) 
\Gamma \left( \frac{1 + \alpha + \mu +  \nu}{2} \right)  \nonumber \\
& & \quad \quad \times \pFq43{\tfrac{1+\alpha+ \mu+\nu}{2} \,\, \tfrac{1+\alpha - \mu + \nu}{2} \,\, \frac{1+ \alpha + \mu - \nu}{2} \,\, \frac{1 + \alpha - \mu - \nu}{2} }
{1+ \alpha  \,\, 1+ \tfrac{\alpha}{2}  \,\, \tfrac{1+ \alpha}{2} }{\frac{b^{2}}{4a^{2}}} \nonumber  \\
\quad T_{6} & = & \frac{1}{8} \frac{a^{\alpha-1}}{b^{\alpha }}  \nonumber \\
& & \frac{\Gamma(\alpha)}{\Gamma(1-\alpha)} 
\Gamma \left( \frac{1-\alpha - \mu + \nu}{2} \right) 
\Gamma \left( \frac{1 - \alpha - \mu - \nu}{2} \right) 
\Gamma \left( \frac{1 - \alpha + \mu -  \nu}{2} \right) 
\Gamma \left( \frac{1 - \alpha + \mu +  \nu}{2} \right)  \nonumber \\
& & \quad \quad \times \pFq43{\tfrac{1-\alpha+ \mu+\nu}{2} \,\, \tfrac{1-\alpha - \mu + \nu}{2} \,\, \frac{1- \alpha + \mu - \nu}{2} \,\, \frac{1 - \alpha - \mu - \nu}{2} }
{1- \alpha  \,\, 1- \tfrac{\alpha}{2}  \,\, \tfrac{1- \alpha}{2} }{\frac{b^{2}}{4a^{2}}} \nonumber 
\end{eqnarray}
\noindent
and then letting $\mu = \nu = 0, \, b=a$ and $\alpha \rightarrow 0$ yields, after scaling the parameter $a$
\begin{multline}
\label{Kint-20} 
\int_{0}^{\infty} K_{0}^{3}(x)   = \lim\limits_{\alpha \rightarrow 0} 
\frac{1}{8} \left[ \frac{\Gamma(- \alpha) \Gamma^{4} \left( \frac{1+ \alpha}{2} \right) }{\Gamma(1 + \alpha)} 
\pFq32{ \tfrac{1+ \alpha}{2} \,\, \tfrac{1+ \alpha}{2} \,\, \tfrac{1+\alpha}{2} }{ 1+ \alpha \,\, 1+ \tfrac{\alpha}{2} }{\frac{1}{4} } \right.  \\
\left. + \frac{\Gamma(\alpha) \Gamma^{4} \left( \tfrac{1 - \alpha}{2} \right)}{\Gamma(1 - \alpha)} 
\pFq32{ \tfrac{1- \alpha}{2} \,\, \tfrac{1- \alpha}{2} \,\, \tfrac{1-\alpha}{2} }{ 1 - \alpha \,\, 1- \tfrac{\alpha}{2} }{\frac{1}{4} }  \right].
\end{multline}

The authors have been unable to produce a simpler analytic expression for this limiting value. 
\end{example}

\begin{example}
A similar argument as the one presented in the previous example yields 
\begin{multline}
\label{Kint-41} 
\int_{0}^{\infty} K_{0}^{4}(x) \, dx    = \frac{\sqrt{\pi}}{16} \lim\limits_{\alpha \rightarrow 0}  \left[
\frac{\Gamma^{2}(-\alpha) \Gamma^{2} \left( \alpha + \tfrac{1}{2} \right) \Gamma \left( 2 \alpha + \tfrac{1}{2} \right)}{\Gamma(2 \alpha + 1)}
\pFq43{ \tfrac{1}{2} \,\, \alpha + \tfrac{1}{2} \,\, \alpha + \tfrac{1}{2} \,\, 2 \alpha + \tfrac{1}{2}}{1+ \alpha \,\, 1+ \alpha \,\, 1+ 2 \alpha }{1}  \right. \\
\left. + 2 \sqrt{\pi} \Gamma(-\alpha) \Gamma(\alpha)  \Gamma( \tfrac{1}{2} + \alpha ) \Gamma( \tfrac{1}{2} - \alpha) 
\pFq43{ \tfrac{1}{2} \,\,\,\, \tfrac{1}{2} \,\,\,\,  \tfrac{1}{2} + \alpha \,\,\,\,  \tfrac{1}{2} - \alpha }{1 \,\,\,\, 1+ \alpha \,\,\,\, 1-  \alpha }{1}  \right. \\
\left.  + \frac{\Gamma^{2}(\alpha) \Gamma^{2}( \tfrac{1}{2} - \alpha) \Gamma( \tfrac{1}{2} - 2 \alpha)}{\Gamma(1 -2 \alpha)} 
\pFq43{ \tfrac{1}{2} \,\,\,\, \tfrac{1}{2} - \alpha  \,\,\,\, \tfrac{1}{2} - \alpha \,\,\,\,  \tfrac{1}{2} - 2 \alpha }{1-  \alpha \,\,\,\, 1 -  \alpha \,\,\,\, 1- 2  \alpha }{1}  \right]
\end{multline}
\noindent
Details are omitted.
\end{example}

\section{Mellin transforms of products}
\label{sec-special}

This section presents a method to evaluate the Mellin transform 
\begin{equation}
I(s,b) = \int_{0}^{\infty} x^{s-1} f(x) g(bx) \,dx.
\end{equation}
\noindent
knowing a series for $f$ of the form
\begin{equation}
f(x) = \sum_{n} \phi_{n} F(n)x^{\beta n}
\end{equation}
\noindent
and the inverse Mellin transform 
\begin{equation}
g(x) = \frac{1}{2 \pi i } \int_{\gamma}x^{-s}  \varphi(s)  \, ds.
\end{equation}
\noindent
For $\kappa > 0$, the change of variables $s = \kappa s'$ gives 
\begin{equation}
g(x) = \frac{1}{2 \pi i } \int_{- i \infty}^{i \infty}x^{-\kappa s}  \widetilde{\varphi}(s)  \, ds,
\label{rep-gamma}
\end{equation}
\noindent
where $\widetilde{\varphi}(s) = \kappa \varphi(\kappa s)$.  The formula \eqref{rep-gamma} is now written as
\begin{equation}
g(x) = \frac{1}{2 \pi i } \int_{\gamma}x^{-\kappa s}  \varphi(s)  \, ds,
\label{rep-gamma1}
\end{equation}
\noindent
that is, the tilde notation is dropped and the parameter $\kappa$ is kept.

Then 
\begin{eqnarray}
I & = & \int_{0}^{\infty} x^{\alpha -1} f(x) g(bx) \, dx \\
& = & \frac{1}{2 \pi i } \int_{\gamma} \varphi(s) b^{- \kappa s} 
\left[ \sum_{n} \phi_{n} F(n) \left[ \int_{0}^{\infty} x^{\alpha + \beta n - \kappa s -1} \, dx \right]  \right] \, ds 
\nonumber \\
& = & \frac{1}{2 \pi i } \int_{\gamma} \varphi(s) b^{- \kappa s} 
\left[ \sum_{n} \phi_{n} F(n) \langle \alpha + \beta n - \kappa s \rangle   \right] \, ds 
\nonumber  \\
& = & \frac{1}{2 \pi i }   \int_{\gamma} \frac{\varphi(s) b^{- \kappa s} }{| \beta |}
\left[ \sum_{n}  \phi_{n} F(n)  \left \langle \frac{\alpha}{\beta}   - \frac{\kappa s}{\beta} + n  \right \rangle   \right] \, ds 
\nonumber   \\
& = & \frac{1}{ | \beta | } \frac{1}{2 \pi i } 
\int_{\gamma} \varphi(s) b^{-\kappa s} \Gamma \left( \frac{\alpha - \kappa s}{\beta} \right) 
F \left( \frac{\kappa s - \alpha}{\beta} \right)  \, ds. \nonumber 
\end{eqnarray}

This is stated as a theorem.

\begin{theorem}
\label{thm-identity1}
Assume the function $f(x)$ has an expansion given by 
\begin{equation}
\label{exp-f}
f(x) = \sum_{n} \phi_{n} F(n) x^{\beta n}
\end{equation}
\noindent
and the function $g(x)$ is given by rescaled version of the inverse Mellin transform 
\begin{equation}
\label{exp-g}
g(x) = \frac{1}{2 \pi i } \int_{\gamma} \varphi(s) x^{-\kappa  s} \, ds.
\end{equation}
\noindent
Then 
\begin{equation*}
\int_{0}^{\infty} x^{\alpha - 1} f(x) g(bx) \, dx = 
 \frac{1}{ | \beta | } \frac{1}{2 \pi i } 
\int_{\gamma} \varphi(s) \Gamma \left( \frac{\alpha - \kappa s}{\beta} \right) 
F \left( \frac{\kappa s - \alpha}{\beta} \right) b^{- \kappa s} \, ds. 
\end{equation*}
\end{theorem}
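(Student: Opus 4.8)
The plan is to run the formal computation displayed immediately above the statement, making explicit which rule of the method of brackets is invoked at each step. First I would substitute the rescaled inverse Mellin representation \eqref{exp-g} for the factor $g(bx)$, so that $g(bx) = \frac{1}{2\pi i}\int_{\gamma} \varphi(s)\, b^{-\kappa s}\, x^{-\kappa s}\, ds$, pulling the constant $b^{-\kappa s}$ out in front of the $x$-integral. Next I would insert the bracket-series expansion \eqref{exp-f} for $f(x)$ and interchange the contour integral, the sum over $n$, and the integral over $[0,\infty)$. The inner integral $\int_{0}^{\infty} x^{\alpha+\beta n - \kappa s - 1}\,dx$ is then recognised, by the very definition of a bracket, as $\langle \alpha + \beta n - \kappa s\rangle$, leaving
\begin{equation*}
I = \frac{1}{2\pi i}\int_{\gamma} \varphi(s)\, b^{-\kappa s}\left[\sum_{n} \phi_{n} F(n)\,\langle \alpha + \beta n - \kappa s\rangle\right]\,ds.
\end{equation*}

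The core of the argument is the evaluation of the inner one-dimensional bracket series in $n$, treating $s$ as a parameter. I would first apply the scaling Rule \ref{rule2.2} to normalise the coefficient of $n$ inside the bracket, writing $\langle \alpha + \beta n - \kappa s\rangle = \frac{1}{|\beta|}\bigl\langle n + \tfrac{\alpha - \kappa s}{\beta}\bigr\rangle$, and then invoke Rule $E_{1}$, equation \eqref{mult-sum}, which is Ramanujan's Master Theorem. The vanishing of the bracket forces $n^{*} = \tfrac{\kappa s - \alpha}{\beta}$, so the sum collapses to $\frac{1}{|\beta|}F(n^{*})\Gamma(-n^{*}) = \frac{1}{|\beta|}\, F\bigl(\tfrac{\kappa s - \alpha}{\beta}\bigr)\Gamma\bigl(\tfrac{\alpha - \kappa s}{\beta}\bigr)$. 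Substituting this expression back under the contour integral reproduces exactly the right-hand side of the claimed identity, completing the formal proof.

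The step I expect to carry the real content is the last one: the legitimacy of applying Rule $E_{1}$ pointwise in $s$ and then reinserting the resulting expression inside the contour integral. Within the heuristic framework of the method of brackets this is automatic, but a fully rigorous justification would require that the expansion \eqref{exp-f} have the analyticity in the continuous variable presupposed by Ramanujan's Master Theorem, and that the interchange of the sum with both integrations be warranted by absolute convergence in a common vertical strip for $s$. The remaining manipulations---extracting $b^{-\kappa s}$, the sign flip between the argument of $F$ and that of the gamma factor, and the factor $1/|\beta|$ produced by the scaling rule---are entirely routine.
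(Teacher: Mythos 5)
Your proposal is correct and follows essentially the same route as the paper's own derivation: substitute the rescaled inverse Mellin representation for $g(bx)$, expand $f$ as a bracket series, identify the inner $x$-integral as the bracket $\langle \alpha + \beta n - \kappa s \rangle$, rescale via Rule \ref{rule2.2}, and collapse the sum with Rule $E_{1}$ at $n^{*} = (\kappa s - \alpha)/\beta$. Your closing remark on the heuristic status of the interchange of limits and the pointwise application of Ramanujan's Master Theorem is a fair caveat, but it is commentary rather than a deviation from the paper's argument.
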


\begin{example}
Entry $6.532.4$ in \cite{gradshteyn-2015a} states that 
\begin{equation}
\label{65324} 
\int_{0}^{\infty} \frac{x \, J_{0}(Ax) \, dx}{x^{2}+k^{2}} = K_{0}(Ak).
\end{equation}
\noindent
Theorem \ref{thm-identity1} is now used to establish this evaluation.

Start with the expansion 
\begin{equation}
f(x) = \frac{1}{x^{2}+k^{2}}  = \sum_{n} \phi_{n} \left[ \Gamma(n+1) k^{-2n-2} \right] x^{2n}.
\end{equation}
\noindent
This gives \eqref{exp-f} with $F(n) = \Gamma(n+1)k^{-2n-2}$ and $\beta = 2$.

Now use \cite[10.9.23]{olver-2010a} 
\begin{equation}
J_{\nu}(z) = \frac{1}{2 \pi i } \int_{\gamma} \frac{\Gamma(t) }{\Gamma(\nu-t+1)} \left( \frac{z}{2} \right)^{\nu - 2t} \, dt,
\end{equation}
\noindent
and  replace $z$ by $Ax$ to produce the desired representation of the Bessel function:
\begin{equation}
J_{0}(Ax) = \frac{1}{2 \pi i } \int_{\gamma}
 \frac{(Ax)^{2s} \Gamma(s) }{2^{2s} \Gamma(1-s)}  \, ds.
\end{equation}
\noindent
In the notation of \eqref{exp-g}, the parameters are  $\kappa=2, \, \beta = 2, \, \alpha = 2$ and 
$b=A$  and 
$\begin{displaystyle}
\varphi(s) = \frac{\Gamma(s)}{2^{2s} \, \Gamma(1-s)}
\end{displaystyle}$. Theorem \ref{thm-identity1} now gives 
\begin{equation}
\label{bessel-last1}
\int_{0}^{\infty} \frac{x J_{0}(Ax) }{x^{2}+k^{2}} \, dx =
\frac{1}{4 \pi i } \int_{ \gamma} \Gamma^{2}(s) \left( \frac{Ak}{2} \right)^{-2s} \, ds.
\end{equation}
Now the formula \cite[10.32.13]{olver-2010a})
\begin{equation}
\label{mellin-bes2}
K_{\nu}(z) = \frac{1}{4 \pi i } \left( \frac{z}{2} \right)^{\nu} 
\int_{\gamma} \Gamma(t) \Gamma(t- \nu) \left( \frac{z}{2} \right)^{-2t} \, dt,
\end{equation}
\noindent
In particular for $\nu = 0$ this becomes 
\begin{equation}
\label{bessel-last2}
K_{0}(z)  = \frac{1}{4 \pi i } \int_{\gamma} \Gamma^{2}(t) \left( \frac{z}{2} \right)^{-2t} \, dt.
\end{equation}
\noindent
Formula \eqref{65324} now follows from \eqref{bessel-last1} and \eqref{bessel-last2}.
\end{example}

\begin{example}
The next evaluation is 
\begin{equation}
\label{65212}
I = \int_{0}^{\infty} x K_{\nu}(ax) J_{\nu}(bx) \, dx = \frac{b^{\nu}}{a^{\nu}(a^{2}+b^{2})}.
\end{equation}
\noindent
This is entry $6.521.2$ in \cite{gradshteyn-2015a}. 

Formulas \eqref{mellin-bes1} and \eqref{mellin-bes3} give, respectively,  the representations 
\begin{equation}
K_{\nu}(z) = \frac{1}{4 \pi i } \int_{\gamma} \Gamma(s) \Gamma(s-\nu) \,
\left( \frac{z}{2} \right)^{-2s+\nu}  \, ds
\end{equation}
\noindent
and 
\begin{equation}
J_{\nu}(z) = \frac{1}{2 \pi  i} \int_{\gamma} \frac{\Gamma(-s)}{\Gamma(s + \nu +1)} 
\left( \frac{z}{2} \right)^{2s+\nu} \, ds.
\end{equation}
\noindent
Replacing in \eqref{65212} and recognizing the $x$-integral into a bracket yields 
\begin{multline}
I = \frac{1}{2 (2 \pi  i)^{2}} \left( \frac{ab}{4} \right)^{\nu} 
\int_{\gamma_{1}} \int_{\gamma_{2}} 
\frac{\Gamma(-t) \Gamma(s) \Gamma(s- \nu) }{\Gamma(t + \nu + 1) } 
\left( \frac{a}{2} \right)^{- 2s} \left( \frac{b}{2} \right)^{2t} \\
\times  \frac{1}{2} \langle 1 + \nu + t  - s \rangle \, ds \, dt.
\end{multline}
\noindent
The bracket is now used to eliminate the $s$-integral, the result is 
\begin{equation}
I = \frac{1}{a^{2}} \left( \frac{b}{a} \right)^{\nu} 
\frac{1}{2 \pi i } \int_{\gamma} \Gamma(-t) \Gamma(1+t) \left( \frac{b^{2}}{a^{2}} \right)^{t} \, dt.
\end{equation}

The gamma terms are expanded as bracket series using \eqref{gamma-brack1} and then 
eliminate the line integral with one of the brackets  to obtain 
\begin{eqnarray}
I & = &  \frac{b^{ \nu}}{a^{ \nu +2}}  \frac{1}{2 \pi i } \int_{\gamma} 
 \sum_{n_{1}, n_{2}} \left( \frac{b^{2}}{a^{2}} \right)^{t} \phi_{n_{1}n_{2}}  \langle n_{1} - t \rangle 
 \langle n_{2} +  1 + t \rangle \, dt \\
 & = & \frac{b^{\nu}}{a^{\nu+2}} \sum_{n_{1},n_{2}} \phi_{n_{1}n_{2}} \left( \frac{b^{2}}{a^{2}} \right)^{n_{1}} 
 \langle n_{2}+n_{1}+1 \rangle. \nonumber
 \end{eqnarray}
\noindent
The method of brackets now gives two different expressions obtained by using $n_{1}$ or $n_{2}$ as the 
free index of summation. These options give series converging in disjoint regions $|b|< |a|$ and 
$|b|> |a|$. The rules of the method of brackets shows that  each sum gives the value of the integral in the 
corresponding region. In this case, both cases  give the same expression 
\begin{equation}
I = \frac{b^{\nu}}{a^{\nu}(a^{2}+b^{2})},
\end{equation}
\noindent
for the value of the integral. This confirms \eqref{65212}.
\end{example}

\section{An integral involving exponentials and Bessel modified functions}
\label{sec-exp-bessel}

The method developed in this work is now applied to the computation of some definite integrals. The idea is 
relatively simple: given a function $f(x)$ with a  Mellin transform $\varphi(s)$  containing gamma factors, the integral 
\begin{equation}
F = \int_{0}^{\infty} e^{-x} f(x) \, dx
\end{equation}
\noindent
can be evaluated by writing the exponential as 
\begin{equation}
e^{-x} = \sum_{n} \phi_{n}x^{n}
\label{exp-ser1}
\end{equation}
\noindent 
and then using the method of brackets. The examples below illustrates this process. 

Integrals involving the Bessel function $K_{\nu}(x)$ have been presented in \cite{gonzalez-2017a}. The computation 
there is based on the concept of \textit{totally null/divergent representations}. The first type includes
\begin{equation}
K_{0}(x) = \frac{1}{x} \sum_{n=0}^{\infty} \frac{\Gamma^{2}(n + \tfrac{1}{2})}{n! \, \Gamma(-n)} 
\left( - \frac{4}{x^{2}} \right)^{n}
\end{equation}
\noindent
in which every term vanishes. There is a similar expression for a series in which every term diverges. In spite of 
the lack of rigor, these series have shown to be useful in the evaluation of definite integrals. See 
\cite{gonzalez-2017a} for details.  A second technique used before is the integral 
representation of $K_{\nu}(x)$ such as 
\begin{equation}
K_{\nu}(x) = \frac{2^{\nu} \Gamma \left( \nu + \tfrac{1}{2} \right)}{\Gamma(\tfrac{1}{2})} x^{\nu} 
\int_{0}^{\infty} \frac{\cos t  \, dt}{(x^{2}+t^{2})^{\nu+1/2}},
\end{equation}
\noindent
and then apply the methods of brackets.  The method present next is an improvement over those employed in \cite{gonzalez-2017a}. 

\begin{example}
Consider the integral 
\begin{equation}
F(b,\nu) = \int_{0}^{\infty} e^{-x} K_{\nu}(bx) \, dx.
\end{equation}
\noindent
Naturally the parameter $b$ can be scaled out, but it is instructive to leave it as is. 

Write the exponential function as in \eqref{exp-ser1} and  the Bessel function from \eqref{mellin-bes1} as 
\begin{equation}
\label{mellin-bes2a}
K_{\nu}(bx) = \frac{1}{4 \pi i } \left( \frac{bx}{2} \right)^{\nu} 
\int_{-i \infty}^{i \infty} \Gamma(s) \Gamma(s- \nu) \left( \frac{bx}{2} \right)^{-2s} \, ds.
\end{equation}
\noindent
Then
\begin{eqnarray}
F(b,\nu) & = &  \frac{1}{2} \sum_{n_{1}} \phi_{n_{1}} \frac{1}{2 \pi i } 
\int_{\gamma} \left( \frac{b}{2} \right)^{\nu - 2s} \Gamma(s) \Gamma(s- \nu) 
\left( \int_{0}^{\infty} x^{n_{1}-2s + \nu} \, dx \right) \, ds \nonumber \\
& = &  \frac{1}{2} \sum_{n_{1}} \phi_{n_{1}} \frac{1}{2 \pi i } 
\int_{\gamma} \left( \frac{b}{2} \right)^{\nu - 2s} \Gamma(s) \Gamma(s- \nu) 
\langle n_{1} - 2 s + \nu +1 \rangle \, ds \nonumber 
\end{eqnarray}
\noindent
Now replace the gamma factors by their bracket expansions as in \eqref{gamma-brack1} to produce 
\begin{equation}
F(b,\nu) = 
\frac{1}{2} \sum_{n_{1}} \phi_{123} \frac{1}{2 \pi i } 
\int_{\gamma} \left( \frac{b}{2} \right)^{\nu - 2s} 
\langle s + n_{2} \rangle \, \langle s - \nu + n_{3} \rangle \, \langle n_{1} - 2s + \nu + 1 \rangle 
 \, ds \nonumber 
 \end{equation}
 \noindent
 Using the bracket $\langle s + n_{2} \rangle$ to apply Theorem \ref{thm-fun1} gives a bracket series for the 
 integral $F(b,\nu)$:
 \begin{equation}
 \label{value-f1}
 F(b,\nu) = \frac{1}{2} \sum_{\vec{n}} \phi_{123} 
 \left( \frac{b}{2} \right)^{\nu+2n_{2}} \langle -n_{2} -\nu + n_{3} \rangle 
 \langle n_{1} + 2 n_{2} + \nu + 1 \rangle,
 \end{equation}
 \noindent
 where $\vec{n} = \{n_{1},n_{2},n_{3}\}$.
 
 The method of brackets is now used to produce three sums for \eqref{value-f1}: 
 \begin{eqnarray}
 T_{1} & = & \frac{1}{4} \sum_{n=0}^{\infty} \frac{(-1)^{n}}{n!} 
 \Gamma \left( \frac{1 - \nu +n}{2} \right) 
 \Gamma \left( \frac{1+ \nu + n}{2} \right) 
 \left( \frac{b}{2} \right)^{-n-1} \label{tsum-1} \\
 T_{2} & = & \frac{1}{2} \sum_{n=0}^{\infty} \frac{(-1)^{n}}{n!} \Gamma(-n-\nu) \Gamma(1+2n+\nu) 
 \left( \frac{b}{2} \right)^{\nu + 2n} \label{tsum-2} \\
 T_{3} & = & \frac{1}{2} \sum_{n=0}^{\infty} 
 \frac{(-1)^{n}}{n!} \Gamma(\nu-n) \Gamma(1+2n - \nu) \left( \frac{b}{2} \right)^{-\nu+2n}
 \label{tsum-3}
 \end{eqnarray}
 \noindent
 Observe that $T_{1}$ diverges since it is of type ${_{2}}F_{0}$ (this sum plays a role in the asymptotic study 
 of the integral when $b \rightarrow \infty$, not described here) and the series  $T_{2}, \, T_{3}$ converge when $|b|<1$. The method 
 of brackets now states that $F(b,\nu) = T_{2}+T_{3}$, when $|b| <1$. Under this assumption 
 and since $T_{3}(b,\nu) = T_{2}(b, - \nu)$, it suffices to obtain 
 an expression for $T_{2}(b, \nu)$.  This is the next result.

 \begin{proposition}
 The function $T_{2}(b, \nu)$ is given by
 \begin{equation}
 T_{2}(b,\nu)  = - \frac{\pi}{2 \sin(\pi \nu)} \frac{b^{\nu} (1 + \sqrt{1 - b^{2}})^{- \nu}}{\sqrt{1-b^{2}}}. 
 \end{equation}
 \end{proposition}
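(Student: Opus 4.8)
The plan is to collapse the series \eqref{tsum-2} defining $T_{2}(b,\nu)$ into a single Gauss hypergeometric function and then recognize the latter through a classical quadratic transformation. First I would rewrite the two gamma factors in the summand in terms of Pochhammer symbols. Iterating $\Gamma(z+1)=z\Gamma(z)$ gives $\Gamma(-\nu-n)=(-1)^{n}\Gamma(-\nu)/(1+\nu)_{n}$ and $\Gamma(1+\nu+2n)=\Gamma(1+\nu)\,(1+\nu)_{2n}$. Substituting these into \eqref{tsum-2} makes the two factors of $(-1)^{n}$ cancel, so that
\begin{equation*}
T_{2}(b,\nu)=\frac{1}{2}\Gamma(-\nu)\Gamma(1+\nu)\left(\frac{b}{2}\right)^{\nu}\sum_{n=0}^{\infty}\frac{(1+\nu)_{2n}}{(1+\nu)_{n}\,n!}\left(\frac{b}{2}\right)^{2n}.
\end{equation*}

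Next I would treat the prefactor and the summand separately. The reflection formula $\Gamma(z)\Gamma(1-z)=\pi/\sin(\pi z)$ with $z=-\nu$ yields $\Gamma(-\nu)\Gamma(1+\nu)=-\pi/\sin(\pi\nu)$, which already produces the constant $-\pi/(2\sin(\pi\nu))$ appearing in the claim. For the summand I would apply the duplication identity $(1+\nu)_{2n}=4^{n}\left(\tfrac{1+\nu}{2}\right)_{n}\left(\tfrac{2+\nu}{2}\right)_{n}$; since $4^{n}(b/2)^{2n}=b^{2n}$, the series collapses to
\begin{equation*}
\sum_{n=0}^{\infty}\frac{\left(\tfrac{1+\nu}{2}\right)_{n}\left(\tfrac{2+\nu}{2}\right)_{n}}{(1+\nu)_{n}\,n!}\,b^{2n}=\pFq21{\tfrac{1+\nu}{2}\,\,\,\tfrac{2+\nu}{2}}{1+\nu}{b^{2}}.
\end{equation*}
This identifies $T_{2}$ as a constant multiple of $(b/2)^{\nu}$ times a Gauss hypergeometric function whose parameters exhibit the special pattern $c=2a$ with second numerator parameter $a+\tfrac12$, where $a=\tfrac{1+\nu}{2}$.

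The decisive step is then the quadratic transformation
\begin{equation*}
\pFq21{a\,\,\,\,a+\tfrac12}{2a}{z}=\frac{1}{\sqrt{1-z}}\left(\frac{1+\sqrt{1-z}}{2}\right)^{1-2a},
\end{equation*}
which I expect to be the main obstacle to record cleanly, as it is precisely the identity that converts the algebraic series into the radical form demanded by the statement. I would either cite it from a standard table of quadratic transformations or verify it directly: the right-hand side solves the same hypergeometric differential equation as the left, and both sides agree at $z=0$ in value and first derivative, the common $z^{1}$ coefficient being $\tfrac{2a+1}{4}$, which pins down the identification.

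Taking $a=\tfrac{1+\nu}{2}$ gives $1-2a=-\nu$, so with $z=b^{2}$ the hypergeometric factor becomes $2^{\nu}(1-b^{2})^{-1/2}(1+\sqrt{1-b^{2}})^{-\nu}$. Combining this with the prefactor $-\tfrac{\pi}{2\sin(\pi\nu)}(b/2)^{\nu}$ and using $2^{\nu}(b/2)^{\nu}=b^{\nu}$ collapses everything to
\begin{equation*}
T_{2}(b,\nu)=-\frac{\pi}{2\sin(\pi\nu)}\,\frac{b^{\nu}(1+\sqrt{1-b^{2}})^{-\nu}}{\sqrt{1-b^{2}}},
\end{equation*}
which is the assertion. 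The only genuine analytic input beyond routine gamma-function bookkeeping is the quadratic transformation; every other manipulation is algebraic and valid for $|b|<1$, the region where the series $T_{2}$ converges.
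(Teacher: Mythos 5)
Your proof is correct, but after the common opening step it takes a genuinely different route from the paper's. Your reduction of the series \eqref{tsum-2} --- the Pochhammer rewriting of the two gamma factors, the reflection formula, and the duplication identity $(1+\nu)_{2n}=4^{n}\left(\tfrac{1+\nu}{2}\right)_{n}\left(1+\tfrac{\nu}{2}\right)_{n}$ --- is exactly Step 1 of the paper's proof, and both arrive at
\begin{equation*}
T_{2}(b,\nu)=-\frac{\pi}{2\sin(\pi\nu)}\left(\frac{b}{2}\right)^{\nu}\pFq21{\tfrac{1+\nu}{2}\,\,\, 1+\tfrac{\nu}{2}}{1+\nu}{b^{2}}.
\end{equation*}
From there you finish in one stroke with the classical quadratic transformation
\begin{equation*}
\pFq21{a\,\,\, a+\tfrac12}{2a}{z}=\frac{1}{\sqrt{1-z}}\left(\frac{1+\sqrt{1-z}}{2}\right)^{1-2a},
\end{equation*}
i.e.\ \cite[15.4.19]{olver-2010a}; your justification of it is sound, and in fact analyticity at $z=0$ together with the value there already forces equality for generic $\nu$, since the second local exponent $1-2a=-\nu$ is not a nonnegative integer, so the first-derivative check is a harmless redundancy. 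The paper never invokes this transformation: instead it expands the algebraic target $(1+\sqrt{1-b^{2}}\,)^{-\nu}/\sqrt{1-b^{2}}$ by Wilf's generating-function formula (2.5.16), reduces the resulting equality of power series to the finite convolution identity \eqref{new-iden1}, and then proves that identity twice --- once by the umbral method, where it collapses to the binomial theorem $\sum_{j}\binom{n}{j}2^{j}\nu^{n-j}=(\nu+2)^{n}$, and once by a Catalan-type generating-function argument. Your route buys brevity and rests on a standard tabulated identity; the paper's route is longer but self-contained, amounting to an elementary re-derivation of that very quadratic transformation in the case it needs, and it produces a binomial-type identity of independent combinatorial interest along the way.
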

 \begin{proof}
 The proof is divided in a sequence of steps.
 
 \noindent
 \texttt{Step 1}. The function $T_{2}(b,\nu)$ is given by 
 \begin{equation}
 T_{2}(b, \nu) = - \frac{\pi b^{\nu}}{2^{\nu+1} \sin(\pi \nu)} 
 \sum_{n=0}^{\infty} \frac{\left( \frac{1+\nu}{2} \right)_{n} \, \left( 1 + \frac{\nu}{2} \right)_{n}}{(1+\nu)_{n}} 
 \, \frac{b^{2n}}{n!}.
 \end{equation}
 \noindent
 \textit{Proof}.  In the definition of $T_{2}(b,\nu)$ use 
 $(w)_{2n} = 2^{2n} \left( \frac{w}{2} \right)_{n} \, \left(  \frac{w+1}{2} \right)_{n},$
 \begin{equation*}
 \Gamma(-n-\nu) = \frac{(-1)^{n} \Gamma(-\nu)}{(1+\nu)_{n}}, \,\,
 \Gamma(1+\nu + 2n) = \Gamma(1+\nu) 2^{2n} \left( \frac{1+\nu}{2} \right)_{n} \left(1 + \frac{\nu}{2} \right)_{n} 
 \end{equation*}
 \noindent
 and $\Gamma(-\nu) \Gamma(1+\nu)  = - \pi/\sin(\pi \nu)$, to obtain the result. 
 
 This identity shows that the statement of the Proposition is equivalent to proving
 \begin{equation}
 \frac{1}{2^{\nu}} \sum_{n=0}^{\infty} 
 \frac{\left( \frac{1+\nu}{2} \right)_{n} \, \left( 1 + \frac{\nu}{2} \right)_{n}}{(1+\nu)_{n}} \frac{b^{2n}}{n!} =
 \frac{(1+ \sqrt{1-b^{2}} \,\, )^{-\nu}}{\sqrt{1-b^{2}}}.
 \end{equation}
 
 \smallskip
 
 \noindent
 \texttt{Step 2}. Formula (2.5.16) in H.~Wilf \cite{wilf-1990a} 
 \begin{equation}
 \left( \frac{1 - \sqrt{1-4x} }{2x} \right)^{k} = \sum_{n=0}^{\infty} \frac{k \, (2n+k-1)!}{n! \, (n+k)!} x^{n}
 \end{equation}
 \noindent
 and the identity $(1+ \sqrt{1-b^{2}})(1 - \sqrt{1- b^{2}}) = b^{2}$ shows, after some elementary simplifications, 
  that the statement of the Proposition is equivalent to 
 \begin{equation}
 \sum_{n=0}^{\infty} 
  \frac{\left( \frac{1+\nu}{2} \right)_{n} \, \left( 1 + \frac{\nu}{2} \right)_{n}}{(1+\nu)_{n}} \frac{c^{n}}{n!} =
  \frac{1}{\sqrt{1-c}}
  \sum_{n=0}^{\infty} 
   \frac{\left( \frac{\nu}{2} \right)_{n} \, \left( 1 + \frac{\nu}{2} \right)_{n}}{(1+\nu)_{n}} \frac{c^{n}}{n!},
   \end{equation}
   \noindent
   where $c = b^{2}$.
   
   \smallskip
   
   \noindent
   \texttt{Step 3}. The identity at the end of Step 2 is equivalent to 
   \begin{equation}
   \pFq21{\frac{1+\nu}{2} \,\, 1 + \frac{\nu}{2}}{1+ \nu}{u} =
   \pFq10{\tfrac{1}{2}}{-}{u} \times 
    \pFq21{\frac{1+\nu}{2} \,\,  \frac{\nu}{2}}{1+ \nu}{u}.
   \end{equation}
   \noindent
   \textit{Proof}. Simply use the binomial theorem 
   \begin{equation}
   (1 - t)^{-\alpha} = \sum_{n=0}^{\infty} \frac{(\alpha)_{n}}{n!} t^{n}.
   \end{equation}
   \noindent
   with $\alpha = 1/2$.
   
   \smallskip
   
   \noindent
   \texttt{Step 4}. The conclusion of the Proposition is equivalent to the identity
   \begin{equation}
   \label{new-iden1}
   \sum_{j=0}^{n} \frac{\binom{n}{j} }{(1+ \nu)_{n-j}} 
   \left( \frac{1}{2} \right)_{j} \left( \frac{1+ \nu}{2} \right)_{n-j} \left( \frac{\nu}{2} \right)_{n-j} =
    \frac{\left( \frac{\nu}{2} \right)_{n} \, \left( 1 + \frac{\nu}{2} \right)_{n}}{(1+\nu)_{n}},
    \end{equation}
    \noindent
    for every $n \in \mathbb{N}$.
    
    \end{proof}
    
    \smallskip
    
    \begin{proof}
    The umbral method \cite{gessel-2003a,roman-1978a} shows that the identity is equivalent to 
    the one formed by replacing Pochhammer symbols $(a)_{k}$ by $a^{k}$.  In this case, \eqref{new-iden1} 
    becomes 
    \begin{equation}
    \sum_{j=0}^{n} \binom{n}{j} 2^{j} \nu^{n-j} = (\nu+2)^{n},
    \end{equation}
    \noindent
    and this  follows from the binomial theorem. 
    \end{proof}
    \begin{proof}
An alternative proof of \eqref{new-iden1} is presented next.  Expressing the Pochhammer symbols in terms of 
binomial coefficients, it is routine to check that the desired identity is equivalent to 
\begin{equation}
\sum_{j=0}^{n} \binom{2j}{j} \binom{\nu+2(n-j)}{n-j} \frac{\nu}{\nu+2(n-j)} = \binom{\nu+2n}{n}.
\label{new-iden2}
\end{equation}
\noindent
This identity is interpreted as the coefficient of $x^{n}$ in the product of the two series 
\begin{equation}
A(x) = \sum_{j=0}^{\infty} \binom{2j}{j}x^{j} \quad \text{and} \quad 
B(x) = \sum_{k=0}^{\infty} \nu \binom{\nu+2k}{k} \frac{x^{k}}{\nu+2k}.
\end{equation}
\noindent
The first sum is given by the binomial theorem as 
\begin{equation}
A(x) = \frac{1}{\sqrt{1-4x}}.
\end{equation}
\noindent
To obtain an analytic expression for $B(x)$ start with entry $2.5.15$ in \cite{wilf-1990a}
\begin{equation}
\frac{1}{\sqrt{1-4x}} \left( \frac{1 - \sqrt{1-4x}}{2x} \right)^{\nu} = 
\sum_{n=0}^{\infty} \binom{\nu+2k}{k} x^{k}
\end{equation}
\noindent
where the term in brackets is the generating function of the Catalan numbers.  Some elementary manipulations 
give 
\begin{equation}
B(x) = \nu x^{-\nu/2} \int_{0}^{\sqrt{x}} \frac{t^{\nu-1}}{\sqrt{1-4t^{2}}} 
\left( \frac{1 - \sqrt{1-4t^{2}}}{2t^{2}} \right)^{\nu} \, dt.
\end{equation}
\noindent
Then \eqref{new-iden2} is equivalent to the identity
\begin{equation}
\int_{0}^{\sqrt{x}} \frac{t^{\nu-1}}{\sqrt{1-4t^{2}}} \left( \frac{1 - \sqrt{1-4t^{2}}}{2t^{2}} \right)^{\nu} \, dt = 
\frac{1}{\nu} x^{\nu/2} \left( \frac{1-\sqrt{1- 4x}}{2x} \right)^{\nu}.
\end{equation}
\noindent
This can now be verified by observing that both sides vanish at $x=0$ and a direct computation shows that the 
derivatives match.
\end{proof}

 The integral is stated next. It  appears as entry $6.611.3$ in \cite{gradshteyn-2015a}. 
  
  \begin{corollary}
  For $\nu, \, b \in \mathbb{R}$ 
  \begin{equation}
  \int_{0}^{\infty} e^{-x} K_{\nu}(bx) \, dx = \\
  \frac{\pi}{2 b^{\nu} \sin(\pi \nu)}
  \left[ \frac{(1+ \sqrt{1-b^{2}})^{\nu} - (1 - \sqrt{1 - b^{2}})^{\nu}}{\sqrt{1-b^{2}}} \right].
  \end{equation}
  \noindent
  In particular, as $b \rightarrow 1$,
  \begin{equation}
  \int_{0}^{\infty} e^{-x} K_{\nu}(x) \, dx = \\
  \frac{\pi \nu}{ \sin(\pi \nu)},
    \end{equation}
and as $\nu \rightarrow 0$, 
\begin{equation}
\int_{0}^{\infty} e^{-x}K_{0}(bx) \, dx =   \frac{\log(1+\sqrt{1-b^{2}})  - \log(1-\sqrt{1-b^2})}{2 \sqrt{1-b^2}}.
\end{equation}
\noindent
  Finally, letting $\nu \rightarrow 0$  and $b \rightarrow 1$ gives 
    \begin{equation}
  \int_{0}^{\infty} e^{-x} K_{0}(x) \, dx = 1.
    \end{equation}
  \end{corollary}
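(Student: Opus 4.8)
The plan is to obtain the displayed integral formula as a direct corollary of the Proposition, and then to recover the three special cases by elementary limiting arguments. Recall from the discussion preceding the Proposition that $F(b,\nu)=T_{2}(b,\nu)+T_{3}(b,\nu)$ for $|b|<1$ and that $T_{3}(b,\nu)=T_{2}(b,-\nu)$. Hence $F(b,\nu)=T_{2}(b,\nu)+T_{2}(b,-\nu)$, and inserting the closed form supplied by the Proposition, together with $\sin(-\pi\nu)=-\sin(\pi\nu)$, gives
\begin{equation*}
F(b,\nu)=\frac{\pi}{2\sin(\pi\nu)\sqrt{1-b^{2}}}\left[b^{-\nu}\bigl(1+\sqrt{1-b^{2}}\bigr)^{\nu}-b^{\nu}\bigl(1+\sqrt{1-b^{2}}\bigr)^{-\nu}\right].
\end{equation*}
To bring this into the form stated in the Corollary I would rewrite the second term using the elementary identity $\bigl(1+\sqrt{1-b^{2}}\bigr)\bigl(1-\sqrt{1-b^{2}}\bigr)=b^{2}$, which yields $\bigl(1+\sqrt{1-b^{2}}\bigr)^{-\nu}=b^{-2\nu}\bigl(1-\sqrt{1-b^{2}}\bigr)^{\nu}$. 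Substituting and factoring out $b^{-\nu}$ collapses the bracket to $b^{-\nu}\bigl[(1+\sqrt{1-b^{2}})^{\nu}-(1-\sqrt{1-b^{2}})^{\nu}\bigr]$, which is exactly the asserted expression.

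The three special cases are then purely a matter of computing limits of indeterminate type. For $b\to1$ I would set $w=\sqrt{1-b^{2}}\to0$ and expand $(1\pm w)^{\nu}=1\pm\nu w+O(w^{2})$; the numerator becomes $2\nu w+O(w^{3})$, the quotient by $w$ tends to $2\nu$, and $b^{\nu}\to1$, leaving $\pi\nu/\sin(\pi\nu)$. For $\nu\to0$ I would use $\sin(\pi\nu)\sim\pi\nu$ together with $(1+w)^{\nu}-(1-w)^{\nu}=\nu\bigl[\log(1+w)-\log(1-w)\bigr]+O(\nu^{2})$; the factors of $\nu$ cancel and the logarithmic expression survives. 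For the joint limit I would take either of the previous two results and let the remaining parameter go to its boundary value: from $\pi\nu/\sin(\pi\nu)$ letting $\nu\to0$ gives $1$, and symmetrically from the logarithmic formula letting $w\to0$ gives $\tfrac{1}{2w}\bigl[\log(1+w)-\log(1-w)\bigr]\to1$.

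Since the heavy lifting has already been carried out in the Proposition, no serious obstacle remains; the only points requiring care are bookkeeping ones. Each special case is a genuine $0/0$ indeterminacy, so the leading-order expansions must be taken consistently, and one should note that the apparent singularities of the general formula at $\sin(\pi\nu)=0$ are removable precisely in the ranges where the integral converges (for $b=1$ this restricts attention to $|\nu|<1$, which is compatible with the limits taken). Beyond this the derivation is a short algebraic reduction followed by three routine limit computations.
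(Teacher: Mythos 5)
Your proposal is correct and follows exactly the route the paper intends: the Corollary is the immediate consequence of $F(b,\nu)=T_{2}(b,\nu)+T_{3}(b,\nu)=T_{2}(b,\nu)+T_{2}(b,-\nu)$ combined with the Proposition's closed form, simplified via $\bigl(1+\sqrt{1-b^{2}}\bigr)\bigl(1-\sqrt{1-b^{2}}\bigr)=b^{2}$, with the three special cases obtained by the routine limits you describe. The paper leaves this algebra implicit, and your write-up supplies precisely those steps, so there is nothing to correct.
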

\end{example}

\section{Flexibility of the method of brackets}
\label{sec-flexibility}

This final section illustrates the flexibility of the method of brackets. To achieve this goal, we present four 
different evaluations of the integral 
\begin{equation}
\label{flex-0}
I(a,b) = \int_{0}^{\infty} \frac{e^{-a^{2}x^{2}} \, dx}{x^{2} + b^{2}}.
\end{equation}
\noindent
The parameters $a,\, b$ are assumed to be real and positive.  This formula appears as entry $3.466.1$ in 
\cite{gradshteyn-2015a} with value 
\begin{equation}
\label{real-value}
I(a,b) = \frac{\pi}{2b} e^{a^{2}b^{2} }( 1 - \textnormal{Erf}(ab) )
\end{equation}
\noindent
 and the reader will find in \cite{albano-2011a} an elementary proof of it.  This section presents $4$ different ways to 
 use the method of brackets to evaluate this integral. 
 
\smallskip 

\noindent
\texttt{Method 1}.  Start with the bracket series representations
\begin{eqnarray}
\exp(- a^{2} x^{2}) & = & \sum_{n_{1}} \phi_{n_{1}} a^{2n_{1}} x^{2n_{1}} \label{flex-4} \\
\frac{1}{x^{2}+b^{2}} & = & \sum_{n_{2}} \sum_{n_{3}} \phi_{n_{2}n_{3}}  b^{2n_{2}}  x^{2n_{3}}  \langle 1 + n_{2} + n_{3} \rangle \nonumber 
\end{eqnarray}
\noindent
and produce the  bracket series 
\begin{equation}
I(a,b) = \sum_{n_{1}} \sum_{n_{2}} \sum_{n_{3}} \phi_{n_{1}n_{2}+n_{3}} a^{2n_{1}} b^{2n_{2}} 
\langle 1+ n_{2} + n_{3} \rangle \, 
\langle 2n_{1} + 2n_{3} + 1 \rangle.
\end{equation}

\smallskip 

\noindent
\texttt{Method 2}.  The second form begins with the Mellin-Barnes representations 
\begin{eqnarray}
\exp(-a^{2} x^{2} ) & = & \frac{1}{4 \pi i } \int_{\gamma} x^{-t} a^{-t} \Gamma \left( \frac{t}{2} \right) \, dt \label{flex-8} \\
\frac{1}{x^{2}+b^{2}} & = & \frac{1}{4 \pi b^{2} i} \int_{\gamma} x^{-s} b^{s} \Gamma \left( \frac{s}{2} \right) \Gamma \left( 1 - \frac{s}{2} \right) \, ds
\nonumber 
\end{eqnarray}
\noindent
Replacing in \eqref{flex-0} a bracket appears and one obtains the representation 
\begin{equation}
I(a,b) = \frac{1}{4b^{2} (2 \pi i )^{2}} \int_{\gamma_{1}} \int_{\gamma_{2}} a^{-t} b^{s} 
\Gamma \left( \tfrac{t}{2} \right) \Gamma \left( \tfrac{s}{2} \right) \Gamma \left( 1 - \tfrac{s}{2} \right) \langle 1 - t - s \rangle \, ds \, dt.
\label{flex-11}
\end{equation}
\noindent
The bracket is used to evaluate the $s$-integral to produce $s^{*} = 1-t$ and the expression 
\begin{equation}
I(a,b) = \frac{1}{4b^{2} (2 \pi i )} \int_{\gamma} a^{-t} b^{1-t} \Gamma \left( \tfrac{t}{2} \right) 
\Gamma \left( \tfrac{1-t}{2} \right) 
\Gamma \left( \tfrac{1+t}{2} \right)  \, dt.
\end{equation}
\noindent
The next step is to express the gamma factors in the numerator as bracket series to obtain 
\begin{equation*}
I(a,b) = \frac{1}{4b^{2} (2 \pi i )} \sum_{n_{1}} \sum_{n_{2}} \sum_{n_{3}} \phi_{n_{1} n_{2} n_{3}} 
\int_{\gamma} a^{-t} b^{1-t} 
\langle    \tfrac{t}{2} + n_{1} \rangle 
\langle \tfrac{1}{2} - \tfrac{t}{2} + n_{2} \rangle 
\langle \tfrac{1}{2} + \tfrac{t}{2} + n_{3} \rangle \, dt
\end{equation*}
\noindent
and for instance selecting the bracket in $n_{1}$ to eliminate the integral and using 
\begin{equation}
\left \langle \tfrac{t}{2} + n_{1} \right \rangle = 2 \langle t + 2 n_{1} \rangle
\end{equation}
\noindent
gives 
\begin{equation}
I(a,b) = \frac{1}{2b} \sum_{n_{1}} \sum_{n_{2}} \sum_{n_{3}} \phi_{n_{1}n_{2}n_{3}} a^{2n_{1}} b^{2n_{1}} 
\langle \tfrac{1}{2} + n_{1} + n_{2} \rangle 
\langle \tfrac{1}{2} -  n_{1} + n_{3} \rangle.
\end{equation}

\smallskip 

\noindent
\texttt{Method 3}.  The next way to evaluate the integral \eqref{flex-0} is a mixture of the previous two. 
Using 
\begin{eqnarray}
\frac{1}{x^{2} + b^{2}} & = & \sum_{n_{1}} \sum_{n_{2}} \phi_{n_{1}n_{2}} b^{2n_{1}} x^{2n_{2}} \langle 1 + n_{1} + n_{2} \rangle \label{flex-17} \\
\exp(-a^{2}x^{2}) & = & \frac{1}{4 \pi i } \int_{\gamma} x^{-t} a^{-t} \Gamma \left( \tfrac{t}{2} \right) \, dt \nonumber
\end{eqnarray}
\noindent
and the usual procedures gives
\begin{equation*}
I(a,b) = \frac{1}{4 \pi i } \sum_{n_{1}} \sum_{n_{2}} b^{2n_{1}} \langle 1+ n_{1} + n_{2} \rangle \int_{\gamma} a^{-t} 
\Gamma \left( \tfrac{t}{2} \right) \langle 2n_{2} - t +1 \rangle \, dt
\end{equation*}
\noindent
Now use the bracket in $n_{2}$ to eliminate the integral and produce 
\begin{equation}
I(a,b) = \frac{1}{2} \sum_{n_{1}} \sum_{n_{2}} \phi_{n_{1}n_{2}} a^{-2 n_{2}-1} b^{2n_{1}} \langle 1+ n_{1} + n_{2} \rangle \Gamma \left( n_{2} + 
\tfrac{1}{2} \right)
\end{equation}
\noindent
and replacing the gamma factor by its bracket series yields 
\begin{equation}
I(a,b) = \frac{1}{2} \sum_{n_{1}} \sum_{n_{2}} \sum_{n_{3}} \phi_{n_{1}n_{2}n_{3}} b^{2n_{1}} a^{-2n_{2}-1} 
\langle 1+ n_{1} + n_{2} \rangle \langle\tfrac{1}{2} + n_{2} + n_{3} \rangle.
\end{equation}

\smallskip 

\noindent
\texttt{Method 4}. The final form uses the representations 
\begin{eqnarray}
\frac{1}{x^{2} + b^{2}} & = & \frac{1}{4 \pi b^{2}i} \int_{\gamma} x^{-s} b^{s} \Gamma \left( \frac{s}{2} \right) 
\Gamma \left( 1 - \frac{s}{2} \right) \, ds \label{felx-22} \\
\exp(- a^{2}x^{2} ) & = & \sum_{n_{1}} \phi_{n_{1}} a^{2n_{1}} x^{2n_{1}} \nonumber 
\end{eqnarray}
\noindent
to write 
\begin{equation}
I(a,b) = \frac{1}{4 b^{2} \pi i } \sum_{n_{1}}  \phi_{n_{1}} a^{2n_{1}} \int_{\gamma} b^{s} 
\Gamma \left( \tfrac{s}{2} \right) \Gamma \left( 1 - \tfrac{s}{2} \right) \langle 2n_{1} - s + 1 \rangle \, ds,
\end{equation}
\noindent 
where the last bracket comes the original integral. 
Now write the gamma factors as bracket series to produce 
\begin{equation*}
I(a,b) = \frac{1}{2b^{2}(2 \pi i )} \sum_{n_{1}} \sum_{n_{2}} \sum_{n_{3}} \phi_{n_{1} n_{2} n_{3}} a^{2n_{1}} 
\left[ \int_{\gamma} b^{s} \langle \tfrac{s}{2} + n_{2} \rangle \langle 1 - \tfrac{s}{2} + n_{3} \rangle \langle 2n_{1} - s +1 \rangle 
\, ds \right],
\end{equation*}
\noindent
and using the $n_{2}$-bracket to eliminate the integral yields 
\begin{equation}
I(a,b) = \frac{1}{b^{2}} \sum_{n_{1}} \sum_{n_{2}} \sum_{n_{3}} \phi_{n_{1}n_{2}n_{3}} a^{2n_{1}} b^{-2n_{2}} 
\langle 1+ n_{2} + n_{3} \rangle \, \langle 2n_{1} + 2n_{2} + 1 \rangle. 
\label{flex-26}
\end{equation}

These four bracket series lead to the terms 
\begin{eqnarray}
T_{1} & = & \frac{1}{2b} \sum_{k=0}^{\infty} \frac{1}{k!} \Gamma( \tfrac{1}{2} - k ) \Gamma( \tfrac{1}{2} +k ) (-a^{2} b^{2})^{k} \\
& = & \frac{\pi}{2b} \exp(a^{2}b^{2}) \nonumber \\ 
T_{2} & = & \frac{a}{2} \sum_{k=0}^{\infty} \frac{1}{k!} \Gamma(1+k) \Gamma( - \tfrac{1}{2} - k) (-a^{2} b^{2} )^{k} \\
& = & - \frac{\pi}{2b} \exp(a^{2}b^{2}) \textnormal{erf}(ab) \nonumber \\ 
T_{3} & = & \frac{1}{2a b^{2}} \sum_{k=0}^{\infty} \frac{1}{k!} \Gamma(1+k) \Gamma( \tfrac{1}{2} + k ) \left( - \frac{1}{a^{2}b^{2}} \right)^{k} \\
& = & \frac{\sqrt{\pi}}{2ab^{2}}  \pFq20{1 \,\,\,\,  \tfrac{1}{2}}{-}{- \frac{1}{a^{2}b^{2}}}. \nonumber 
\end{eqnarray}
\noindent
The values $T_{1}$ and $T_{2}$ and Rule $E_{3}$ in Section \ref{sec-rules} are combined to give the value 
\begin{equation}
I(a,b) = \frac{\pi}{2b} \exp(a^{2}b^{2}) \left[ 1 - \textnormal{erf}(ab) \right],
\end{equation}
\noindent
conforming \eqref{real-value}. The value of $T_{3}$ is useful in the asymptotic study of $I(a,b)$ as $a^{2}b^{2} \rightarrow \infty$.

\medskip 

The final example illustrates a different combination ideas. 

\begin{example}
Consider the two-dimensional integral 
\begin{equation}
J(a,b) = \int_{0}^{\infty} \int_{0}^{\infty} x^{a-1} y^{b-1} \textnormal{Ei}(-x^{2}y) K_{0} \left( \frac{x}{y} \right) \, dx \, dy
\end{equation}
\noindent
where $\textnormal{Ei}$ is the exponential integral defined by
\begin{equation}
\textnormal{Ei}(-x) =  - \int_{1}^{\infty} \frac{\exp(- x t )}{t} \, dt \quad \textnormal{for } x>0
\end{equation}
\noindent
with divergent representation 
\begin{equation}
\textnormal{Ei}(-x^{2}y) = \sum_{\ell \geq 0} \phi_{\ell} \frac{x^{2 \ell} y^{\ell}}{\ell}
\end{equation}
\noindent
(see \cite{gonzalez-2017a}  for the concept of divergent expansion in the context of the 
method of brackets). Using this series and the Mellin-Barnes representation of $K_{0}$
\begin{equation}
K_{0} \left( \frac{x}{y} \right)  = \frac{1}{4 \pi i } \int_{\gamma} \Gamma^{2}(t) \left( \frac{x}{2y} \right)^{-2t} \, dt
\end{equation}
\noindent
yields 
\begin{equation}
J(a,b) = \frac{1}{4 \pi i } \sum_{\ell \geq 0} \phi_{\ell} \frac{1}{\ell} \int_{\gamma} \Gamma^{2}(t)  2^{2t} 
\langle a + 2 \ell - 2t \rangle \langle b + \ell + 2t \rangle \, dt,
\end{equation}
\noindent
where the two brackets come from the integrals on the half-line. To evaluate this integral use the bracket $\langle a  + 2 \ell - 2 t \rangle $ to produce 
\begin{eqnarray}
\label{1025}
J(a,b) & = &  \frac{1}{8 \pi i } \sum_{\ell \geq 0} \frac{\phi_{\ell}}{\ell} 
\left[ \int_{\gamma} \Gamma^{2}(t) 2^{2t} \langle \tfrac{a}{2} + \ell - t \rangle \langle b + \ell + 2t \rangle \, dt \right] \\
& = & \frac{1}{8 \pi i } \sum_{\ell \geq 0} \frac{\phi_{\ell}}{\ell} \left[ 2 \pi i \Gamma^{2}(t) 2^{2t} \langle  b+ \ell + 2 t \rangle \right]_{t = \tfrac{a}{2} + \ell} 
\nonumber \\
& = & \frac{1}{4} \sum_{\ell \geq 0} \frac{\phi_{\ell}}{\ell} \Gamma^{2} \left( \tfrac{a}{2}+ \ell \right) 2^{a + 2 \ell} \langle a + b + 3 \ell \rangle, \nonumber 
\end{eqnarray}
\noindent
and eliminating the series with the bracket yields the value
\begin{equation}
J(a,b) = - \frac{4^{-1 - b/3 + a/6}}{a+b} \Gamma^{2} \left( \frac{a - 2 b }{6} \right) \Gamma \left( \frac{a+b}{3} \right).
\end{equation}
\end{example}

\section{Feynman diagrams}
\label{sec-feynman}

A variety of interesting integrals appear in the evaluation of Feynman diagrams. See 
\cite{smirnov-2004a,smirnov-2006a} for details. The example corresponds to a loop diagram with a 
propagator in the scalar theory with internal propagators of equal masses \cite{boos-1991a}:
{{
\begin{figure}[!ht]
\begin{center}
\centerline{\psfig{file=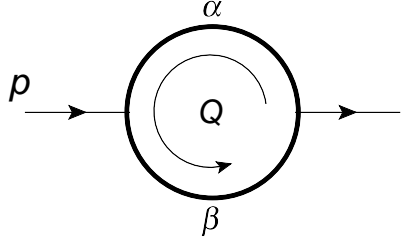,width=15em,angle=0}}
\caption{The loop diagram}
\label{figure1}
\end{center}
\end{figure}
}}
\newline
\noindent
The integral attached to this diagram is 
\begin{equation}
J(\alpha, \beta, m,m) = \int d^{D}Q \frac{1}{ \left[ Q^{2}-m^{2} \right]^{\alpha} \left[ (p+Q)^{2} - m^{2} \right]^{\beta}}
\end{equation}
\noindent
with Mellin-Barnes representation given by 
\begin{multline}
J(\alpha,\beta,m,m) = \pi^{D/2} i^{1-D} \frac{ (-m^{2})^{D/2-\alpha-\beta}}{\Gamma(\alpha) 
\Gamma(\beta)} \\
\times \frac{1}{2 \pi i } \int_{\gamma} 
(p^{2})^{_u} (-m^{2})^{u} 
\frac{\Gamma(-u) \Gamma(\alpha + u) \Gamma(\beta + u) \Gamma(\alpha + \beta - D/2 +u)}
{\Gamma(\alpha + \beta + 2u)} \, du 
\end{multline}
\noindent
Replacing the Gamma factors in the numerator by its corresponding bracket series gives 
\begin{multline}
J(\alpha, \beta, m,m) \leftarrow \pi^{D/2} i^{1-D} 
\frac{(-m^{2})^{D/2-\alpha - \beta}}{\Gamma(\alpha) \Gamma(\beta)} 
\sum_{\{ n \}} \phi_{n_{1}, \ldots, n_{d}}  \\
\frac{1}{2 \pi i} \int_{\gamma} (p^{2})^{u} (- m^{2})^{u} 
\frac{ \langle -u +n_{1} \rangle
 \langle \alpha + u + n_{2} \rangle 
\langle \beta + u + n_{3} \rangle 
\langle \alpha + \beta - D/2+ u + n_{4} \rangle 
} 
{\Gamma(\alpha + \beta + 2u) } \, du.
\end{multline}
\noindent
To evaluate the integral, the vanishing of the bracket 
$\langle - u + n_{1} \rangle$ gives $u^{*} = n_{1}$
and $J$ is expressed as a bracket series
\begin{multline}
J(\alpha, \beta, m,m) = \pi^{D/2} i^{1-D} 
\frac{(-m^{2})^{D/2- \alpha - \beta}}{\Gamma(\alpha) \Gamma(\beta)} \\
\times \sum_{\{ n \}} \phi_{n_{1}, \ldots, n_{4}} 
(p^{2})^{u^{*}} (-m^{2})^{u^{*}} 
\frac{
\langle \alpha + u^{*} + n_{2} \rangle 
\langle \beta + u^{*} + n_{3} \rangle 
\langle \alpha + \beta  - D/2 + u^{*} + n_{4} \rangle 
}
{ \Gamma(\alpha + \beta + 2 u^{*}) }. 
\end{multline}

The terms obtained from the bracket series above are given as  hypergeometric values 
\begin{eqnarray*}
T_{1} & = & \pi^{D/2} i^{1-D} (-m^{2})^{D/2 - \alpha - \beta} 
\frac{\Gamma(\alpha + \beta - D/2)}{\Gamma(\alpha + \beta)} 
\pFq32{\alpha \,\,\,\,\,\,  \beta \,\,\,\,\,\,  \alpha + \beta - \tfrac{D}{2}}{\tfrac{\alpha+ \beta}{2} \,\,\,\,\,\, 
\tfrac{\alpha + \beta +1}{2} }{ \frac{p^{2}}{4m^{2}}}  \\
T_{2} & = & \pi^{D/2} i^{1-D} (-m^{2})^{D/2  - \beta}  (p^{2})^{-\alpha} \, 
\frac{\Gamma( \beta - D/2)}{\Gamma( \beta)} 
\pFq32{\alpha \,\,\,\,\,\,  1+ \frac{\alpha - \beta}{2} 
 \,\,\,\,\,\,  \tfrac{1+  \alpha - \beta}{2}}{  1 + \alpha - \beta \,\,\,\,\,\, 
1 - \beta + \tfrac{D}{2} }{ \frac{4m^{2}}{p^{2}}}  \\
T_{3} & = & \pi^{D/2} i^{1-D} (-m^{2})^{D/2  - \alpha}  (p^{2})^{-\beta} \, 
\frac{\Gamma( \alpha - D/2)}{\Gamma( \alpha)} 
\pFq32{\beta \,\,\,\,\,\,  1+ \frac{\beta - \alpha}{2} 
 \,\,\,\,\,\,  \tfrac{1+  \beta - \alpha}{2}}{  1 + \beta - \alpha \,\,\,\,\,\, 
1 - \alpha + \tfrac{D}{2} }{ \frac{4m^{2}}{p^{2}}}  \\
T_{4} & = & \pi^{D/2} i^{1-D}  (p^{2})^{D/2 - \alpha -\beta} \, 
\frac{\Gamma(D/2 -  \alpha ) \Gamma( D/2 - \beta) \Gamma( \alpha + \beta - D/2)}
{\Gamma(\alpha) \Gamma(\beta) \Gamma(D- \alpha - \beta)}  \\
& & \quad \quad \quad \quad \quad \quad  \times \pFq32{  \alpha + \beta -  \frac{D}{2} 
 \,\,\,\,\,\,  1 + \tfrac{\alpha +  \beta  - D }{2} \,\,\,\,\,  \tfrac{1 + \alpha + \beta -D}{2} }
 {
1 +\beta -   \tfrac{D}{2} \,\,\,\,\, 1 + \alpha - \tfrac{D}{2} }{ \frac{4m^{2}}{p^{2}}}.
\end{eqnarray*}

The conclusion is the evaluation of the Feynman diagram shown in Figure \ref{figure1} is given by 
\begin{equation}
J(\alpha, \beta, m, m ) = 
\begin{cases}
T_{1} \quad & \textnormal{for} \,\, \left| \frac{p^{2}}{4m^{2}}   \right| < 1 \\
T_{2}  + T_{3} + T_{4} \quad & \textnormal{for} \,\, \left| \frac{4m^{2}}{p^{2}}  \right| < 1. 
\end{cases}
\end{equation}

. 

\section{Conclusions}
\label{sec-conclusion}

The work presented here gives a new procedure to evaluate Mellin-Barnes integral representations. The 
main idea is to replace the Gamma factors appearing in such representations by its corresponding 
bracket series.  The method of brackets is then used to evaluate these integrals. This method has been 
shown to be simple and efficient, given its  algorithmic nature.  A collection of representative examples 
has been provided.

\section{Acknowledgments} 
I.K. was supported in part by Fondecyt (Chile) Grants Nos. 1040368, 1050512 and 1121030, by DIUBB (Chile) Grant Nos. 102609,  GI 153209/C  and GI 152606/VC.  I.~G. wishes to thank the hospitality of the
 Department of Mathematics at Tulane University, where part of this 
work was conducted. 

\smallskip 

The Feynman diagram has been drawn using JaxoDraw \cite{binosi-2009a}.

\end{document}